\documentclass{amsart}
\usepackage{graphicx} 

\usepackage{amsfonts,amsthm,mathtools,amsmath,amssymb,fullpage,enumitem,xcolor,hyperref,array,booktabs,comment}
\usepackage{tikz}
\usetikzlibrary{positioning, arrows.meta}

\newtheorem{theorem}{Theorem}
\newtheorem{proposition}{Proposition}
\newtheorem{corollary}{Corollary}

\newtheorem{hypothesis}{Hypothesis}
\newtheorem{lemma}{Lemma}
\newtheorem{question}{Question}
\theoremstyle{definition}

\newtheorem{remark}[theorem]{Remark}

\newcommand{\tors}{\operatorname{tors}}

\newcommand{\Gal}{\operatorname{Gal}}
\newcommand{\Jac}{\operatorname{Jac}}
\newcommand{\Sym}{\operatorname{Sym}}

\newcommand{\gon}{\operatorname{gon}}
\newcommand{\ord}{\operatorname{ord}}

\newcommand{\Q}{\mathbb{Q}}
\newcommand{\Z}{\mathbb{Z}}
\newcommand{\GL}{\operatorname{GL}}

\newcommand{\SL}{\operatorname{SL}}

\newcommand{\Supp}{\operatorname{Supp}}

\DeclareMathOperator{\im}{im}

\newcommand{\calE}{{\mathcal E}}

\title{On the Finiteness of Isolated $j$-invariants for $X_1(N)$}
\author{Abbey Bourdon}

\begin{document}
\begin{abstract}
    Characterizing isolated points on the modular curve $X_1(N)$ is a key obstruction to classifying all points of a fixed degree. These points do not lie in infinite parameterized families, making them difficult to obtain through geometric constructions. In this paper, we focus on the collection of ``isolated $j$-invariants" for $X_1(N)$, which are the values obtained by mapping isolated points to the $j$-line. Work of the author in collaboration with Ejder, Liu, Odumodu, and Viray \cite{BELOV} asks whether there are only finitely many isolated $j$-invariants lying in extensions of bounded degree. Here, we explore how this question relates to other uniformity problems in the field and give new finiteness results for isolated $j$-invariants in $\Q$. As an application, we show similar methods give sharpened polynomial bounds on torsion for non-CM elliptic curves having rational $j$-invariant.
\end{abstract}

\maketitle

\section{Introduction}
The modular curve $X_1(N)$ is a smooth projective curve over $\Q$ whose (non-cuspidal) points parametrize elliptic curves with a distinguished point of order $N$. A classical problem is to characterize the $K$-rational points of $X_1(N)$ for a fixed number field $K$. For $K=\Q$, this was done by Mazur, confirming a conjecture of Ogg \cite{OggConjecture}. The classification shows $X_1(N)(\Q)$  has a non-cuspidal point if and only if the curve has genus 0. That is, these points are ``explained by geometry." 

\begin{theorem}[Mazur \cite{mazur77}]
    $X_1(N)$ has a non-cuspidal rational point if and only if $1 \leq N \leq 10$ or $N=12$.
\end{theorem}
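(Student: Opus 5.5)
Mazur's theorem has a constructive half and an obstruction half; only the second is hard, and there I would follow Mazur's original strategy.

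\emph{Existence.} For $N\in\{1,\dots,10,12\}$ the curve $X_1(N)$ has genus $0$ and a rational cusp, so it is isomorphic to $\mathbb{P}^1_\Q$. I would make the infinitely many non-cuspidal rational points explicit with the Tate normal form
\[
E(b,c)\colon\ y^2+(1-c)xy-by=x^3-bx^2,\qquad P=(0,0).
\]
Imposing $[N]P=O$ cuts out a rational curve in the $(b,c)$-plane, and one writes down a rational parametrization for each $N$. For example, $N=5$ gives $b=c=t$. Any parameter value with nonzero discriminant then yields a non-cuspidal point of $X_1(N)(\Q)$.

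\emph{Non-existence.} I first reduce to prime level. If $X_1(N)(\Q)$ has a non-cuspidal point, then there is an elliptic curve $E/\Q$ with a rational point of order $N$. Hence $E$ also has a rational point of order $d$ for every $d\mid N$, so it suffices to rule out
\begin{itemize}
\item prime levels $N=p\ge 11$;
\item the minimal remaining composite levels $N\in\{14,15,16,18,20,21,24,25,27,35,49\}$.
\end{itemize}
Some of these can be handled directly:
\begin{itemize}
\item $N=16,18,20,24$: each $X_1(N)$ has positive genus. Here I would compute the rational points of the relevant (hyper)elliptic or genus-one quotients via $2$-descent or finite Mordell--Weil groups, as in Kubert's tables, and check that all of them are cusps.
\item $N=14,15,21,11$: I would use the elliptic curves $X_1(N)$ or $X_0(N)$, whose Mordell--Weil groups are finite and explicitly computable.
\item $N=25,27,49$: these reduce to known small-genus curves.
\end{itemize}

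The main obstacle is the prime case $p\ge 13$ (with $p=11$ handled via $X_1(11)$). Here I would follow Mazur's Eisenstein-quotient argument, in these steps:
\begin{enumerate}
\item Take $J=J_0(p)$ and the Eisenstein ideal $I\subset\mathbb{T}$.
\item Form the Eisenstein quotient $\tilde J=J/\gamma J$, where $\gamma$ ranges over the appropriate Hecke-stable primes containing $I$. The key input is that $\tilde J(\Q)$ is finite; this is shown by a descent relative to the Eisenstein prime, using the cuspidal subgroup and the Shimura subgroup.
\item Suppose $x\in X_0(p)(\Q)$ is a non-cuspidal point coming from $(E,P)$. Consider
\[
f\colon X_0(p)^{\mathrm{sm}}\to \tilde J,\qquad f(y)=\mathrm{cl}(y-\infty),
\]
extended over $\Z$ via the N\'eron model.
\item Show that $f$ is a formal immersion at $\infty$ in characteristic $\ell$ for a suitable small prime $\ell\ne p$, say $\ell=2$ or $3$. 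This is done by checking the $q$-expansion of the cotangent map, i.e.\ showing that the relevant Hecke eigenforms mod $\ell$ have nonzero leading coefficients.
\item Since $E$ has a rational $p$-torsion point, it has potentially multiplicative reduction at $\ell$ with $x$ specializing to the cusp $\infty$. A torsion-point argument then shows that $f(x)$ and $f(\infty)$ both lie in the finite group $\tilde J(\Q)$ and coincide mod $\ell$. Finiteness, together with injectivity of reduction on torsion for $\ell>2$ (or a suitable variant for $\ell=2$), forces $f(x)=f(\infty)$.
\item The formal immersion property then gives $x=\infty$, a contradiction.
\end{enumerate}
Verifying the finiteness of $\tilde J(\Q)$ is the deep step. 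I would cite Mazur's \emph{Modular curves and the Eisenstein ideal} for it rather than reprove it, since the paper clearly treats the theorem as classical background.
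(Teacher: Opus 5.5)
The paper gives no proof of this theorem; it only cites Mazur. So your outline has to stand on its own as a sketch of the classical argument: reduce to prime and minimal composite levels, treat the composite levels with explicit curves, and treat the primes with Mazur's Eisenstein-quotient/formal-immersion argument. That architecture is right, and your list of minimal composite levels is correct, but the case analysis does not close.

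The concrete failure is $p=13$, which you route through the Eisenstein quotient. $X_0(13)$ has genus $0$, so $J_0(13)=0$. The Hecke algebra, and hence $\tilde J$, is therefore zero. A map from a curve to a zero-dimensional target cannot be a formal immersion, so your steps 2--6 say nothing here. Mazur's method applies exactly to the primes with $g(X_0(p))>0$, namely $p=11$ and $p\ge 17$. The case $p=13$ needs a separate input, for example the Mazur--Tate theorem that $J_1(13)(\Q)\cong\Z/19\Z$. From that one checks that the genus-$2$ curve $X_1(13)$ has only its six rational cusps as rational points.

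The composite cases also have holes.
\begin{itemize}
\item $N=35$ appears in your list of minimal levels but in none of your three bullets. It can be settled via $X_0(35)$, which has genus $3$ and whose only rational points are its four cusps, but this has to be said.
\item For $N=21$, the only genus-one curve available is $X_0(21)$, since $X_1(21)$ has genus $5$. But $X_0(21)(\Q)$ has eight points, four of them non-cuspidal. One has $j=-140625/8$, the $j$-invariant the paper attaches to Najman's cubic points on $X_1(21)$. So finiteness of the Mordell--Weil group does not finish this case. You must also show that, for each of these four $j$-invariants, the Galois character on the rational cyclic subgroup of order $21$ has order greater than $2$, so that no quadratic twist acquires a rational point of order $21$.
\item For $N=25$, ``reduce to known small-genus curves'' is not an argument. $X_0(25)$ has genus $0$ and $X_1(25)$ has genus $12$, so you need to name a curve whose rational points you actually control.
\end{itemize}
By contrast, $N=27$ and $N=49$ do go through via the rank-$0$ elliptic curves $X_0(27)$ and $X_0(49)$. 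You only need to discard the single CM non-cuspidal point on $X_0(27)$, using the fact that CM curves over $\Q$ have torsion of order at most $6$.
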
 
By Merel's uniform boundedness theorem \cite{Merel96}, we now know that for a fixed degree $d$ there are only finitely many modular curves $X_1(N)$ with a non-cuspidal degree $d$ point. Thus a modern take on the classical classification problem is to determine the $N$ for which $X_1(N)$ admits a non-cuspidal degree $d$ point. This is known for $d \leq 4$ \cite{mazur77, kenkudegleq3, kamienny2degleq3, DEvHMZB2021, classificationdeg4points}. In degree 2, we again find that a non-cuspidal point exists only when it belongs to an infinite parameterized family: if it has positive genus, then $X_1(N)$ has a non-cuspidal quadratic point only when it arises from a degree 2 map to $\mathbb{P}^1$. However, for $d>2$, an essential challenge to classifying degree $d$ points on $X_1(N)$ is the need to control torsion structures that occur for only finitely many elliptic curves. Explicit examples of these exceptional points are known for $d=3$ and $5 \leq d \leq 13$, as discussed in \cite[Appendix A]{classificationdeg4points}, as well as many other suspected examples of higher degree.

In general, if a closed point does not belong to a infinite parameterized family of points of the same degree, we say it is \textbf{isolated}; see Section \ref{IsolatedPoints} for a more precise definition. The problem of classifying degree $d$ isolated points is the only obstruction to extending the classification of degree $d$ points on $X_1(N)$ to $5 \leq d \leq 9$ by \cite[Theorem 3]{DerickxVanHoeij2014} and \cite[Theorem 1.1]{NajmanVarivoda}. Though any curve has only finitely many isolated points, it can be quite difficult to determine them computationally, especially when the curve's Jacobian has positive rank over $\Q$. Indeed, if the genus of a curve is at least 2, then in particular all rational points are isolated, and this set can be notoriously difficult to compute. However, in the case of modular curves, we can use the moduli interpretation to pose new questions about these mysterious points.

One line of investigation first introduced by the author in collaboration with Ejder, Liu, Odumodu, and Viray \cite{BELOV} is the problem of classifying the elliptic curves which give rise to isolated points. We say $j\in X_1(1)$ is an \textbf{isolated $j$-invariant} if there exists an isolated point $x\in X_1(N)$ with $j(x)=j$ under the natural map $j:X_1(N) \rightarrow X_1(1)$.\footnote{Note $j$ is not itself an isolated point on $X_1(1) \cong \mathbb{P}^1$, which has no isolated points.} The \textbf{degree} of an isolated $j$-invariant is the degree of its residue field $\Q(j)$. For example, the modular curve $X_1(21)$ has 2 isolated points of degree 3 as discovered by Najman \cite{najman16}, each associated to an elliptic curve with $j$-invariant $-140625/8$. Thus $-140625/8$ is an isolated $j$-invariant of degree 1. Since the degree of the residue field of a closed point on $X_1(N)$ is determined by an elliptic curve defined over $\overline{\Q}$, it is enough to consider elliptic curves up to geometric isomorphism. 

 One collection of isolated $j$-invariants that is relatively well understood is the set of CM $j$-invariants, which correspond to elliptic curves with complex multiplication. Any CM $j$-invariant gives rise to isolated points on $X_1(N)$ for infinitely many integers $N$ by \cite[Theorem 7.1]{BELOV}, and conversely any $X_1(N)$ for $N\geq 721$ has an isolated CM point \cite[Theorem 8.2]{CGPS2022}. However, many non-CM isolated $j$-invariants are known. We summarize low degree examples in Table \ref{tab:non_cm_j_invariants}, where $r(k)$  indicates there are $k$ closed points of degree $r$. This is largely based on van Hoeij's table of low degree points \cite{vanHoeij}, and we provide full justification in Section $ \ref{ExampleSection}$. The 4 non-CM isolated $j$-invariants in $\Q$ are conjectured to be the only ones \cite{Algorithm2025,TeraoX1}.

\begin{table}[htbp]
    \centering
    \renewcommand{\arraystretch}{1.3}
    \begin{tabular}{>{\centering\arraybackslash}p{.3cm} >{\centering\arraybackslash}p{3.5cm} >{\centering\arraybackslash}p{6.7cm} >{\centering\arraybackslash}p{4.2cm}}
        \toprule
        \textbf{$d$} & \textbf{\# non-CM degree $d$ isolated $j$-invariants} & \textbf{Curves} & \textbf{Degrees of associated isolated points} \\
        \midrule
        1  & 4 & $X_1(21), X_1(28), X_1(37)$ & 3, 9, 6 \& 18 \\
        2  & 1 & $X_1(34)$  & 8 \\
        3  & 4 & $X_1(25), X_1(31), X_1(39), X_1(42)$ & 6, 9, 9, 9 \\
        4  & 6  &  $X_1(33), X_1(34),X_1(35), X_1(39), X_1(40), X_1(42)$ & 8, 8, 8, 12, 8, 8 \\
        5  & 5  & $X_1(28), X_1(29), X_1(30), X_1(40)$ & 5, 10, $5{(2)}$, 10 \\
       6  & 2 & $X_1(39)$ & 12(2) \\
        7  & 4  & $X_1(25),X_1(33),X_1(36)$ & 7, 7, 7(2) \\
        8  & 2  & $X_1(42)$ & 8(2) \\
        9  & 15 & $X_1(29), X_1(32), X_1(33)$, $X_1(34)$, $X_1(40)$ & 9, 9(8), 9, 9(4), 9 \\
       10 & 8 & $X_1(29),X_1(35), X_1(38)$, $X_1(39)$, $X_1(40)$  & 10, 10(2), 10(3), 10, 10\\
        \bottomrule
    \end{tabular}
    \caption{Known non-CM isolated $j$-invariants of degree $\leq 10$.} 
    \label{tab:non_cm_j_invariants}
\end{table}
\vspace{-.5cm}

As consequence of Merel's uniform boundedness theorem \cite{Merel96}, there are only finitely many non-cuspidal isolated points of fixed degree on $X_1(N)$ as $N$ ranges over all integers. However, this does not imply finiteness of isolated $j$-invariants of degree $d$. Thus we can pose a new uniformity problem.

\begin{question}[Bourdon, Ejder, Liu, Odumodu, Viray \cite{BELOV}]\label{MainQuestion}
Are there only finitely many isolated $j$-invariants of each fixed degree?
\end{question}

In this article, we will explain how Question \ref{MainQuestion} relates to other uniformity problems in the literature and provide new results in the case of elliptic curves with rational $j$-invariant.

\subsection{Relation to Other Uniformity Questions} There are a number of other hypotheses one can make concerning the uniformity of Galois representations of elliptic curves over number fields of a fixed degree. Recall $X_0(N)$ is the smooth projective curve over $\Q$ whose non-cuspidal points correspond to elliptic curves with a rational cyclic isogeny of degree $N$.
\\
\noindent 
\begin{center}
    \fbox{\begin{minipage}{.96\textwidth} 
\begin{hypothesis}
    For each $d \in \Z^+$, there exists $C=C(d)$ such that for all non-CM elliptic curves $E$ defined over number fields of degree $d$ and $p>C$, the $p$-adic Galois representation of $E$ contains $\SL_2(\Z_{p})$. 
\end{hypothesis}

\begin{hypothesis}
    For each $d \in \Z^+$, there are only finitely many isolated $j$-invariants of degree $d$.
\end{hypothesis}

\begin{hypothesis}
    For each $d \in \Z^+$, there exists an $C=C(d)$ such that for all $N>C$, the modular curve $X_0(N)$ has no non-cuspidal, non-CM points of degree $d$.
\end{hypothesis}

\begin{hypothesis}
    For each $d \in \Z^+$, there exists $C=C(d)$ such that for all non-CM elliptic curves $E$ defined over $F$ with $[\Q(j(E)):\Q]=d$ we have  $
    \exp E(F)_{\tors} \leq C \cdot [F:\Q]^{1/2}
    $ and $\#E(F)_{\tors} \leq C \cdot [F:\Q]$.
\end{hypothesis}
\end{minipage}}
\end{center}
\vspace{.3cm}

When $d=1$, these hypotheses are all established conjectures or theorems. In this case, Hypothesis 1 is often referred to as ``Serre's Uniformity Conjecture", though it was a question of Serre \cite[$\S4.3$]{serre72} which has since been conjectured by both Sutherland \cite{sutherland} and Zywina \cite{ZywinaImages}. Hypothesis 3 for $d=1$ is a theorem of Mazur \cite{mazurX0n} and Kenku \cite{kenkuX0n}, and we can take $C=37$. Variants of Hypothesis 4 originate with Clark and Pollack \cite[Conjecture 1.4]{CP18}, though the exponent is not specified. It refines various ``folklore" polynomial bounds conjectures appearing in the literature (e.g., \cite[Conjecture 1]{CCS13}).  Prior work of the author and Genao \cite{BourdonGenao} shows that when $d=1$, for any $\epsilon>0$ there exists a constant $C_{\epsilon}$ such that $\exp E(F)_{\tors} \leq C_{\epsilon} \cdot [F:\Q]^{1+\epsilon}
    $. 

    For $d>1$, less has been formally conjectured, aside from Hypothesis 4 in \cite[Conjecture 1.4]{CP18}, and Hypothesis 3 in the case where $d=2$ as in \cite[Conjecture 1.1]{QuadPointsConj} and \cite[Conjecture 18]{BalakrishnanMazur}. We note  that the statement of Hypothesis 1 was suggested as a possible generalization of Serre's Uniformity Conjecture by Derickx \cite{DerickxPost}, and Hypothesis 3 would imply an affirmative answer to a question of Mazur \cite[Question C]{MazurBonn}; see also Hypothesis SI($d$) in \cite{CP18}.  However, in this paper we establish general implications between these hypotheses, as illustrated in Figure \ref{ImplicationDiagram}. This is proven in Theorem \ref{implication_thm}, with the  implication from Hypothesis 1 to Hypothesis 2 coming from \cite{BELOV}. Thus we can view Question \ref{MainQuestion} as a refinement of Hypotheses 3 and 4 which lacks the full strength of generalized Serre uniformity.

    \begin{remark}      If $C$ is taken to be an absolute constant in Hypothesis 4, this would actually imply Hypothesis 1, as shown in $\S7$. This is related to a question of Hindry and Silverman \cite[$\S3$]{HS99}.
    
    We note that \cite[Theorem 1.7]{CP18} states  Hypothesis 3 implies a weaker version of Hypothesis 4. However, this proof relies on \cite[Theorem 1.2]{LR16}, which contains an error. See \cite[$\S 1$]{Smith23} for details. 
    \end{remark}

\begin{figure}
    \begin{center}
\begin{tikzpicture}[
    box/.style={draw, rectangle, minimum width=2.5cm, minimum height=1cm, align=center, thick},
    arrow/.style={-Latex, thick} 
]

\node[box] (H1) {Hypothesis 1: Generalized Serre Uniformity};
\node[box] (H2) [below=1cm of H1] {Hypothesis 2: Finiteness of Isolated $j$-Invariants};
\node[box] (H3) [below=1cm of H2, xshift=-4cm] {Hypothesis 3: Non-CM Isogeny Bounds};
\node[box] (H4) [below=1cm of H2, xshift=4cm] {Hypothesis 4: Refined Polynomial Bounds};

\draw[arrow] (H1) -- (H2);
\draw[arrow] (H2) -- (H3);
\draw[arrow] (H2) -- (H4);

\end{tikzpicture}
\end{center}
\caption{Implications Among Hypotheses (Theorem \ref{implication_thm})} \label{ImplicationDiagram}
\end{figure}

\subsection{Unconditional Progress for Rational $j$-invariants} Though Serre's Uniformity Conjecture is still open, we have partial classification results concerning the image of $p$-adic Galois representations associated to non-CM elliptic curves over $\Q$. As shown by Ejder \cite{Ejder22}, these are strong enough to imply there are no non-CM isolated points $x \in X_1(p^a)$ with $j(x) \in \Q$ and $p>37$. Thus extending unconditional results requires controlling possible ``entanglement" of torsion point fields associated to distinct primes. Of key importance are criteria which imply for $x\in X_1(N)$ with large prime $p \mid N$ that the residue field contribution at $p$ is nearly as large as it can be. We establish a new result of this form in Corollary \ref{IntroCor}, which improves \cite[Proposition 5]{BourdonGenao}. When combined with work of Lemos \cite{lemosTrans,lemosZ}, this is sufficient to obtain our next result.

\begin{theorem}\label{finiteness_2_primes}
    There are only finitely many rational isolated $j$-invariants associated to $X_1(p^aq^b)$ for primes $p,q$ and non-negative integers $a,b$.
\end{theorem}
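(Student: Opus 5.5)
We argue by contradiction, following the standard strategy for isolated points on $X_1(N)$ from \cite{BELOV}. Suppose $j_0 \in \Q$ is a non-CM isolated $j$-invariant coming from an isolated point $x \in X_1(p^aq^b)$. Since only finitely many CM $j$-invariants lie in $\Q$, we may assume $j_0$ is non-CM. Fix an elliptic curve $E/\Q$ with $j(E)=j_0$.

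\textbf{Step 1: reduce to a level controlled by the Galois image.} By \cite{BELOV}, if $x \in X_1(N)$ is isolated and $j(x)$ is non-CM, then so is the image of $x$ under $X_1(N)\to X_1(M)$. Here $M$ is the divisor of $N$ built from the primes where the adelic image of $E$ is not maximal, with exponents bounded by the levels of the $\ell$-adic images. The mechanism is that the degree of the map $X_1(N)\to X_1(M)$ equals $[\Q(x):\Q(f(x))]$, so isolatedness descends.

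Consequently, if $E$ has surjective $p$-adic representation and there is no entanglement between $p$ and $q$, the $p$-part can be discarded and $x$ maps to an isolated point on $X_1(q^b)$. By Ejder \cite{Ejder22}, points of prime power level with $q>37$ are never non-CM isolated. Each prime power level then contributes only finitely many $j$-invariants, by Faltings together with the finiteness of isolated points on a fixed curve and the bounded $q$-adic levels from the $q$-adic image classifications. The general reduction in \cite{BELOV} also shows that large $q$ with surjective image contributes nothing.

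\textbf{Step 2: handle large primes whose image is not surjective.} For $p$ larger than an absolute constant, non-surjectivity at $p$ is possible only via the normalizer of a non-split Cartan. Lemos \cite{lemosTrans,lemosZ} shows this cannot occur if $E$ has a rational isogeny or a nontrivial rational $\ell$-adic structure of certain types. I would use Corollary \ref{IntroCor}, the improved residue field bound at a large prime $p \mid N$. It should show that the degree of $x$ in the $p$-direction is nearly maximal, roughly $\#\GL_2(\Z/p^a)$ up to a bounded factor. This forces the degree of $x\in X_1(p^aq^b)$ to exceed the genus-type threshold for the fiber over $X_1(q^b)$, which contradicts isolatedness via Riemann--Roch. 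Concretely, the fiber is a $\mathbb{P}^{r}$-parameterized family once the degree exceeds the genus, or else the point is shown to be a pullback. Combining this with Lemos, for $p,q$ both large the image at $p$ and $q$ is all of $\GL_2$ in the relevant sense, and the point descends to level $1$, which is impossible.

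\textbf{Step 3: finitely many small-prime cases.} Once both primes are bounded by an absolute constant, say $37$ or the constant from Corollary \ref{IntroCor}, the $\ell$-adic images have bounded level by known classifications (Rouse--Sutherland--Zureick-Brown). Hence $x$ descends to an isolated point on one of finitely many curves $X_1(M)$, and each has finitely many isolated points.

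\textbf{Main obstacle.} The hardest step is the mixed case where $p$ is large and $q$ is small. Here entanglement between the $p$-torsion field and the $q^b$-torsion field could, a priori, prevent descent. I expect to need Corollary \ref{IntroCor} precisely to bound this entanglement by a constant independent of $p$. The approach is to show that the intersection $\Q(E[p^\infty])\cap\Q(E[q^\infty])$ has abelian or bounded contribution, using that $\SL_2(\Z_p)$ has no large abelian quotients for $p\ge5$.
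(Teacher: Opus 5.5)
There is a genuine gap in Step 2, which is the heart of the theorem: the case where the larger prime $q>37$ has $\im\rho_{E,q}=C_{ns}^+(q)$. Corollary \ref{IntroCor} gives only an \emph{inequality}, $\deg(x)\ge\frac16\deg(f)\deg(f(x))$ for $f\colon X_1(p^aq^b)\to X_1(p^a)$, not an equality. So Theorem \ref{FiberThm} cannot push isolatedness down to $f(x)$, and there is no ``genus threshold for the fiber'' that detects non-isolatedness. The tool that works is Riemann--Roch on all of $X_1(p^aq^b)$: a point of degree exceeding $g(X_1(N))$ is $\mathbb{P}^1$-parameterized. Since $g(X_1(N))$ is slightly less than $\frac1{12}\deg(X_1(N)\to X_1(1))$, and Corollary \ref{IntroCor} already spends a factor of $6$, you need $f(x)$ to have degree at least half of $\deg(X_1(p^a)\to X_1(1))$. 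This is a statement at the \emph{other} prime $p$, which may be small. That missing ingredient is Lemma \ref{Lemma1}, and it is where Lemos's work actually enters. Because $\rho_{E,q}$ is non-surjective for some $q>37$, \cite{lemosTrans,lemosZ} rule out Borel and split-Cartan-normalizer images mod $p$. What remains is surjective, $C_{ns}^+(p)$, 2Cn, 5S4, or 13S4, and the last is excluded by a direct check. The known $\ell$-adic classifications, together with Furio's description of $p$-adic images lifting $C_{ns}^+(p)$, then show that $E$ gives a unique closed point on $X_1(p^a)$ of maximal degree. The exceptions are the 2-adic images 4.8.0.2, 4.16.0.2 and 8.16.0.3, which lose exactly a factor of $2$. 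Combining this with Corollary \ref{IntroCor} gives $\deg(x)\ge\frac1{12}N^2\prod_{\ell\mid N}(1-\ell^{-2})$, or $\frac1{24}$ in the exceptional case, which exceeds the genus by \cite[Proposition 1.40]{Shimura71}.

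Your substitute for this step is also incorrect as stated. Lemos does not show that two large primes cannot both have image $C_{ns}^+$; that is an open case of Serre's uniformity problem. So you cannot conclude that for $p,q$ both large the point descends to level $1$. The paper instead handles a $C_{ns}^+(p)$ image at the second prime inside Lemma \ref{Lemma1}. Likewise, controlling entanglement via ``$\SL_2(\Z_p)$ has no large abelian quotients'' does not apply when the image lies in $C_{ns}^+$, which is virtually abelian. This is exactly why Corollary \ref{IntroCor} is proved through Smith's ramification bound: potentially supersingular reduction at $q$, and $L(E[N])=L$ for the minimal good-reduction extension $L/\Q_q^{nr}$ of degree at most $6$. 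So the real obstacle in your mixed case is the degree at the small prime, not entanglement. Finally, in Step 3 you need a uniform bound on the level of the \emph{joint} $m$-adic image for $m=\prod_{\ell\le 37}\ell$ (Theorem \ref{prop:UniformLevelFiniteSetPrimes}), not only bounds on each $\ell$-adic level separately, since entanglement among small primes can raise the level.
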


The same ingredients bring us closer to establishing Hypothesis 4 for elliptic curves with rational $j$-invariants. Our final main result improves bounds of the author and Genao \cite[Theorem 2]{BourdonGenao} by a square root factor of the degree as well as earlier bounds of Clark and Pollack \cite[Theorem 1.3]{CP18}.

    \begin{theorem}\label{PolynomialBoundsThm}
        For any $\epsilon>0$, there exists a constant $C_{\epsilon}$ such that for all non-CM elliptic curves $E/F$ with $j(E) \in \Q$ we have  $
    \exp E(F)_{\tors} \leq C_{\epsilon} \cdot [F:\Q]^{1/2+\epsilon}
    $ and $\# E(F)_{\tors} \leq C_{\epsilon} \cdot [F:\Q]^{1+\epsilon}$.
    \end{theorem}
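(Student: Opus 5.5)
We bound the contribution of each prime $p$ dividing the exponent $N=\exp E(F)_{\tors}$ to the degree $[F:\Q]$, and then multiply over the primes. First, since $j(E)\in\Q$ and $E$ is non-CM, after a quadratic (or, for $j=0,1728$, sextic/quartic) twist we may assume $E$ is defined over $\Q$. Passing to $F$ changes the torsion only through twisting, which is controlled by a bounded-degree extension. So it suffices to treat $E/\Q$ and a point $P\in E(F)$ of order $N$, and to show
\[ [\Q(P):\Q] \geq c_\epsilon N^{2-2\epsilon}. \]
This gives the exponent bound, and applying it to the full torsion subgroup gives the bound on $\#E(F)_{\tors}$. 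For the cardinality, $E(F)_{\tors}\cong \Z/M\times\Z/N$ with $M\mid N$, and we need $[\Q(E[M],P):\Q]\gg (MN)^{1-\epsilon}$. We get this from the same local estimates, using $\#\GL_2(\Z/M)\approx M^4$ together with the Weil pairing.

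\textbf{Small primes.} Write $N=\prod p_i^{a_i}$. For primes $p\le 37$ (or below any fixed bound), the $p$-adic image of $E/\Q$ is open in $\GL_2(\Z_p)$ with index bounded uniformly in $E$, by the known classification results for $2$-adic, $3$-adic and other small $\ell$-adic images (Rouse--Zureick-Brown and successors). Hence $[\Q(P_p):\Q]\gg p^{2a}$ uniformly, where $P_p$ is the $p$-primary part of $P$.

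\textbf{Large primes.} For $p>37$, the residue field contribution at $p$ is essentially maximal by Corollary \ref{IntroCor} combined with Lemos's results. Concretely, Lemos shows that for $p>37$ the mod $p$ image of $E/\Q$ is not contained in a Borel subgroup, and is not in the normalizer of a split Cartan subgroup. The remaining possibility is the normalizer of a non-split Cartan subgroup, and even then the orbit of a point of order $p^a$ has size $\gg p^{2a}$. So for each such $p$ the degree $[\Q(P_p):\Q]$ is at least $c\,p^{2a}$ with an absolute $c$.

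\textbf{Main obstacle: entanglement.} The key difficulty is that $\Q(P)$ is the compositum of the fields $\Q(P_{p_i})$, and these need not be linearly disjoint. We control the intersections as follows. Any common subfield of $\Q(E[p^\infty])$ and $\Q(E[m])$ with $p\nmid m$ corresponds to a common quotient of the two Galois images. For $p$ large relative to $E$'s exceptional set, the $p$-adic image contains $\SL_2(\Z_p)$ up to bounded index, and $\SL_2(\Z_p)$ has no abelian or solvable quotients of large order except through the determinant. Hence the entanglement is at most abelian, and in fact lies inside $\Q(\zeta_{p^a})$ times a bounded piece. In this case Corollary \ref{IntroCor} is what we use: it guarantees that even after quotienting by such entanglement, the $p$-part contributes $\gg p^{2a}/\varphi$-type factors. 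Losing at most a factor $p^{a}$ from the cyclotomic part would only yield exponent $1$, so the corollary must give the full $p^{2a}$ up to a constant.

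\textbf{Assembling the bound.} Multiplying over primes gives
\[ [\Q(P):\Q]\ge \prod_i c\,p_i^{2a_i} = c^{\omega(N)}N^2. \]
Since $c^{\omega(N)}\gg_\epsilon N^{-\epsilon}$ by $\omega(N)=O(\log N/\log\log N)$, this is exactly the source of the $\epsilon$ in the statement. I expect the entanglement step, namely ensuring a uniform constant $c$ independent of how many primes appear, to be the hardest part.
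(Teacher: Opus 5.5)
Your outline is the paper's proof (Proposition \ref{LowerBoundProp} followed by the $\omega(N)$ estimate):
\begin{enumerate}
\item treat the primes $\le 37$ jointly by a uniform level bound (Lemma \ref{Finite_Degree_Bounds});
\item peel off each prime $q>37$ by applying Corollary \ref{IntroCor} to $X_1(N)\to X_1(N/q^b)$, at a cost of $1/6$;
\item absorb $6^{\omega(N)}$ into $N^{\epsilon}$;
\item get the bound on $\#E(F)_{\tors}$ from $\#E(F)_{\tors}\le(\exp E(F)_{\tors})^2$, so no count of $\GL_2(\Z/M\Z)$ is needed.
\end{enumerate}
The step you flag as hardest is already settled by Corollary \ref{IntroCor} as stated. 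Its constant $1/6$ is absolute and the cofactor $N/q^b$ is arbitrary. So iterating over the primes $q>37$ dividing $N$ costs exactly $6^{-\omega(n_2)}$, where $n_2$ is the part of $N$ supported on those primes.

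You should discard your own explanation of why entanglement is controlled, because it fails exactly in the case that matters.
\begin{itemize}
\item Suppose $\im\rho_{E,q}=C_{ns}^+(q)$. Theorem \ref{SerreUnifProgress} does not exclude this for large $q$. Your phrase ``$p$ large relative to $E$'s exceptional set'' is not uniform in $E$, and making it uniform is Serre's uniformity question.
\item In this case the $q$-adic image has index at least $q(q-1)/2$ in $\GL_2(\Z_q)$. It is also prosolvable, with non-abelian quotients such as the dihedral image of $C_{ns}^+(q)$ in $\mathrm{PGL}_2(\Z/q\Z)$, of order $2(q+1)$.
\item So the image does not contain $\SL_2(\Z_q)$ up to bounded index. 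The argument that common quotients with $\Gal(\Q(E[m])/\Q)$ factor through the determinant therefore does not apply.
\end{itemize}
The actual input, Lemma \ref{Lemma2}, is local at $q$. Such $E$ has potentially good supersingular reduction at $q$. Hence $\Q_q(E[N'])$ lies in the extension $L$ of $\Q_q^{nr}$ of degree $d\in\{1,2,3,4,6\}$ over which $E$ acquires good reduction. Smith's theorem produces an element of valuation $1/(q^{2b}-q^{2b-2})$ in $L(P)$ for $P$ of order $q^b$. This forces $[L(P):L]\ge (q^{2b}-q^{2b-2})/d$, which is where the $1/6$ comes from. In the surjective case the degree is exactly multiplicative, so no ``cyclotomic loss'' arises either.

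Two smaller attributions are also off.
\begin{itemize}
\item For $p\le 37$, complete $p$-adic classifications exist only for a few primes. The uniformity you need is Cadoret--Tamagawa (Theorem \ref{prop:UniformLevelFiniteSetPrimes}) applied to $\prod_{p\le 37}p$.
\item For $q>37$, the exclusion of Borel and split Cartan images is Theorem \ref{SerreUnifProgress}, not Lemos.
\end{itemize}
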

\noindent The exponent bound is nearly optimal, in that the $1/2$ cannot be replaced by anything smaller. This follows, for example, from the fact that any elliptic curve over $\Q$ attains a point of prime order $p$ in as extension of degree at most $p^2-1$. Moreover,
this is the first bound on the exponent  which is too small to permit the inclusion of CM elliptic curves. As shown in the proof of \cite[Theorem 6]{CCS13}, if $E/\Q$ is an elliptic curve with CM by $\Q(\sqrt{-3})$, then there exists a sequence of fields $F_n$ of degree $d_n$ such that $\exp(E(F_n)_{\tors})\gg d_n\sqrt{\log \log d_n}$. 

\subsection{Future Work} Though our control on entanglement is not sufficient to prove Hypothesis 2 unconditionally for rational $j$-invariants, experimental data suggests the problem may be accessible to certain formal immersion arguments. Indeed, a primary obstruction to proving Serre's Uniformity Conjecture is that the associated modular curves lack a nontrivial rank 0 quotient. However, the ``entanglement modular curves" relevant to strengthening Corollary \ref{IntroCor} often possess a such a quotient. See Remark \ref{Rank0Remark}.

\section*{Acknowledgments}
The author is grateful to Pete Clark, Tyler Genao, Sachi Hashimoto, Filip Najman, and Paul Pollack for providing many helpful comments on an earlier draft of this work. In particular, Clark's suggestion to use upper bounds on $\omega(N)$ significantly streamlined the original proof of Theorem \ref{PolynomialBoundsThm}. 

The author was partially supported by NSF grant DMS-2145270.

\section*{AI Statement}
Google Gemini 3.6 was used to assist with the creation of Table \ref{tab:non_cm_j_invariants}. In particular, prior work of van Hoeij \cite{vanHoeij} recorded the degree of $\Q(j(E))$, but these fields needed to be recreated to confirm isolated $j$-invariants were distinct. Google Gemini 3.6 was also used to obtain a reference for the bound on exceptional subgroups \cite[Remark 2.1]{GhateParent}, as applied in the proof of Theorem \ref{LastResult}. It created initial Latex code for Table \ref{tab:non_cm_j_invariants} and Figure \ref{ImplicationDiagram}. Finally, it provided feedback on the initial version of small sections of the article with suspected subtle grammar errors (namely, the abstract and the first paragraph of page 2). 

Claude Sonnet 4.6 was used to speed up preliminary data collection for the second paragraph of Remark \ref{Rank0Remark}. For specific primes $p$ and $q$, it was asked to return a list of modular curves in the  \href{https://beta.lmfdb.org/ModularCurve/Q/}{LMFDB database} covering the modular curve $C_{ns}^+(p) \times C_{ns}^+(q)$, along with their subgroup generators and Jacobian decomposition data.

The author takes full responsibility for the article and its mathematical correctness.

\section{Background}
\subsection{Galois Representations of Elliptic Curves} \label{Background}If an elliptic curve $E$ is defined over a number field $k$, then the absolute Galois group of $k$ acts naturally on $E(\overline{k})$. For points of finite order, this action is encoded in the \textbf{adelic Galois representation} associated to $E$. After choosing compatible bases, this can be represented as
\[
\rho_{E}: \Gal_k \rightarrow  \GL_2(\widehat{\Z}) \cong \prod_{p} \GL_2(\Z_p).
\]
For any positive integer $m$, we can compose $\rho_E$ with projection onto the $p$-adic factors for $p\mid m$, giving the the \textbf{$m$-adic representation},
\[
\rho_{E,m^{\infty}}: \Gal_k \rightarrow  \prod_{p\mid m} \GL_2(\Z_p).
\]
Alternatively, we can reduce $\rho_E$ modulo $m$ to obtain the \textbf{mod $m$ Galois representation}. This gives the Galois action on points of $E$ with order dividing $m$:
\[
\rho_{E,m}: \Gal_k \rightarrow   \GL_2(\Z/m\Z).
\]

Serre's Open Image Theorem \cite{serre72} implies that for a fixed non-CM elliptic curve $E/k$, there exists a constant $C=C(E)$ depending on $E$ such that the mod $p$ Galois representation is surjective for primes $p>C$.  It is interesting to determine the proper subgroups that can occur as mod $p$ images of Galois as $E$ ranges over all elliptic curves defined over $k$. If $k=\Q$ and $p \leq 17$, the groups that can occur as $\im \rho_{E,p}$ are known; this is work of Zywina \cite{ZywinaImages} for $p \leq 11$ and Balakrishnan, Dogra, M\"{u}ller, Tuitman, and Vonk \cite{Balakrishnan,17adic} for $p=13,17$. In this paper, we use the labels of Sutherland \cite{sutherland} to refer these subgroups of $\GL_2(\Z/p\Z)$ that arise as $\im \rho_{E,p}$. For larger primes, we have the following result. Here, $C_{ns}^+(p)$ denotes the normalizer of a non-split Cartan subgroup.
\begin{theorem}[Mazur \cite{mazurX0n}, Serre \cite{Serre1981}, Bilu, Parent, Rebolledo \cite{BPR13}, Furio, Lombardo \cite{FurioLombardo}] \label{SerreUnifProgress}
    Let $E/\Q$ be a non-CM elliptic curve with $j(E) \not\in\{ -9317, -162677523113838677\}$. If $p\geq 19$ is prime, then $\im \rho_{E,p}=\GL_2(\Z/p\Z)$ or $C_{ns}^+(p)$.
\end{theorem}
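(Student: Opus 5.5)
This is a compilation of known results, so the plan is to assemble the literature rather than argue from scratch. Throughout, let $E/\Q$ be non-CM, and let $G=\im\rho_{E,p}$ for a prime $p\ge 19$. The starting point is the classification of maximal subgroups of $\GL_2(\Z/p\Z)$ not containing $\SL_2(\Z/p\Z)$. If $G$ does not contain $\SL_2(\Z/p\Z)$, then $G$ lies in a Borel subgroup, in the normalizer of a split Cartan $C_s^+(p)$, in the normalizer of a non-split Cartan $C_{ns}^+(p)$, or has exceptional projective image $A_4$, $S_4$ or $A_5$. If instead $G\supseteq\SL_2(\Z/p\Z)$, then surjectivity of $\det\circ\rho_{E,p}$ (the Weil pairing over $\Q$) gives $G=\GL_2(\Z/p\Z)$. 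So it suffices to rule out the Borel, split Cartan and exceptional cases, up to the two listed $j$-invariants.

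\textbf{Borel case.} Here $E$ has a rational $p$-isogeny and gives a non-cuspidal rational point on $X_0(p)$. By Mazur \cite{mazurX0n}, the non-CM possibilities with $p\ge 19$ are only $p=37$, with $j\in\{-7\cdot 11^3,\,-7\cdot 137^3\cdot 2083^3\}$. The $p=17$ points fall below our range, and those with $p=19,43,67,163$ are CM. The two $p=37$ values are exactly $-9317$ and $-162677523113838677$, which are excluded by hypothesis.

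\textbf{Split Cartan case.} The image lies in $C_s^+(p)$, which gives a rational non-cuspidal, non-CM point on $X_{s}^+(p)\cong X_0^+(p^2)$. Bilu--Parent--Rebolledo \cite{BPR13} show there are none for $p\ge 11$, $p\ne 13$. Since $p\ge 19$, this case is excluded.

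\textbf{Exceptional case.} The plan is to invoke Serre's argument \cite{Serre1981}, using inertia at $p$. The image of tame inertia contains an element whose projective image has order at least $(p-1)/4$ or so, either from fundamental characters of level $1$ or $2$, or from potentially multiplicative reduction. This exceeds $5$ once $p$ is large enough, which contradicts projective image of order at most $60$ with element orders at most $5$. Serre's original treatment handles $p\ge 17$ or similar, so this case is excluded for $p\ge 19$.

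\textbf{Remaining case.} What is left is $G\subseteq C_{ns}^+(p)$. Here we must show $G$ is all of $C_{ns}^+(p)$ and not a proper subgroup. A proper subgroup of index at least $2$ either lies in $C_{ns}(p)$ itself, or is a smaller subgroup of the normalizer. Zywina's analysis, together with Furio--Lombardo \cite{FurioLombardo}, shows that a non-CM $E/\Q$ cannot have image contained in $C_{ns}(p)$ for $p$ in this range, and that any subgroup of $C_{ns}^+(p)$ occurring as a mod $p$ image must be the full normalizer. The inertia argument shows $G\cap C_{ns}(p)$ contains elements of order divisible by roughly $p^2-1$ over the relevant index, which forces surjectivity onto $C_{ns}^+(p)$ up to a small-index ambiguity, and Furio--Lombardo resolve that ambiguity.

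I expect this last step to be the main obstacle. It is the deepest input, and the attribution needs to be precise about which paper proves fullness of the non-split normalizer image, so I would cite \cite{FurioLombardo} for exactly that statement. With all cases covered, the theorem follows by combining these citations.
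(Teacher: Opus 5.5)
Your proposal is correct and matches the paper, which gives no proof beyond attributing the Borel, split Cartan, exceptional and non-split Cartan cases to Mazur, Bilu--Parent--Rebolledo, Serre and Furio--Lombardo, exactly as you organize them. The one imprecision is in the exceptional case at $p=19$, where an inertia element of order at least $(p-1)/4$ need not have order exceeding $5$; there you need the further fact that surjectivity of $\det\rho_{E,p}$ forces the projective image to be $S_4$ (since neither $A_4$ nor $A_5$ has an index-$2$ subgroup), and elements of $S_4$ have order at most $4$.
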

\noindent The exceptional $j$-invariants are associated to elliptic curves with a rational cyclic 37-isogeny. For any elliptic curve $E/\Q$ with $j(E) \in\{ -9317, -162677523113838677\}$ and $p>37$, the associated mod $p$ Galois representation is surjective.

Serre's Uniformity Conjecture states that there exists a single constant $C$ such that $\rho_{E,p}$ is surjective for all non-CM $E/\Q$ and primes $p>C$. This is still an open problem, with Theorem \ref{SerreUnifProgress} summarizing the  progress to date. By contrast, for any fixed positive integer $m$, there exist strong uniformity results for the level of the $m$-adic Galois representations of non-CM elliptic curves defined over number fields of a fixed degree. Here, the \textbf{level} of the $m$-adic Galois representation is defined to be the smallest positive integer $M$ such that $\im \rho_{E,m^{\infty}}=\pi^{-1}(\im \rho_{E,M})$, where $\pi: \GL_2(\Z_m) \rightarrow \GL_2(\Z/M\Z)$ is the natural reduction map.
\begin{theorem}[\cite{CT13,BELOV}]\label{prop:UniformLevelFiniteSetPrimes}
		Let $d$ be a positive integer and $\calE$ a set of non-CM elliptic curves over number fields of degree at most $d$.
Then for any positive integer $m$, there exists a  positive integer $M$ with $\Supp(M) \subset \Supp(m)$ such that for all $E/k \in \calE$ we have
            \[
				\im \rho_{E, m^{\infty}} = \pi^{-1}(\im \rho_{E, M}).
            \]
\end{theorem}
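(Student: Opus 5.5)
Theorem \ref{prop:UniformLevelFiniteSetPrimes} is cited from \cite{CT13,BELOV}. To prove it, I would first reduce to a single prime. Write $\Supp(m)=\{p_1,\dots,p_r\}$. By Goursat-type arguments, the level of $\im\rho_{E,m^\infty}$ is bounded once two things are uniformly bounded: the levels of each $p_i$-adic image, and the entanglement between the $p_i$-adic factors. The key input is that all the groups involved are open subgroups of $\prod_{p\mid m}\GL_2(\Z_p)$, and the relevant quantity is their index.

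The first step is a uniform bound on the index $[\GL_2(\Z_p):\im\rho_{E,p^\infty}]$ for non-CM $E$ over fields of degree at most $d$.
\begin{enumerate}
\item Since $\im\rho_{E,p^\infty}$ is a closed subgroup, its index is at most $d$ times the index of the $p$-adic image over $\Q(j(E))$, up to a twist.
\item Consider the modular curves $X_H$ for open $H\subseteq\GL_2(\Z_p)$ with $\det H=\Z_p^\times$. An elliptic curve has $p$-adic image contained in $H$ exactly when it gives a point on $X_H$ of degree at most $d$ (after accounting for the $\pm1$ and twisting issues).
\item Use Frey's criterion / Abramovich's gonality bound: the gonality of $X_H$ grows linearly in $[\SL_2(\Z_p):H\cap\SL_2(\Z_p)]$. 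Hence, once the index is large, $X_H$ has only finitely many points of degree at most $d$.
\item Those finitely many points are either CM or correspond to finitely many $j$-invariants. Each such $j$ has open image by Serre's theorem.
\end{enumerate}
This needs care to make the argument uniform. There are infinitely many $H$, but only finitely many minimal ones of each index. The plan is to argue that the set of $H$ whose $X_H$ has infinitely many degree-$\le d$ points is finite up to conjugacy. The remaining points are finitely many $j$-invariants, each contributing a bounded index.

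The second step converts the index bound into a level bound. An open subgroup of $\GL_2(\Z_p)$ of index at most $B$ contains the principal congruence subgroup of level $p^{k}$ for some $k=k(B)$. This follows because the congruence filtration is eventually $p$-power and uniform: $1+p^n M_2(\Z_p)$ for $n\ge1$ (or $\ge2$ when $p=2$) is a uniform pro-$p$ group, and a subgroup of bounded index contains $\Phi^{t}$ of it for bounded $t$. The same argument works in $\prod_{p\mid m}\GL_2(\Z_p)$ once the index of the full $m$-adic image is bounded. That bound again comes from applying the gonality argument to subgroups $H$ of the product, which is just a finite-level modular curve of level dividing a power of $m$. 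So $M=\prod p_i^{k}$ has support in $\Supp(m)$.

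The main obstacle is the uniformity over infinitely many candidate subgroups in the first step. The curves $X_H$ of large index have large gonality and thus finitely many low-degree points, but we need finiteness of the total set of exceptional $j$-invariants across all $H$. I would handle this with a descending-chain argument. Any $H$ of huge index is contained in some maximal-among-"large" $H'$ of index just above the gonality threshold, and there are only finitely many such $H'$ (bounded index, bounded level). Points on $X_H$ map to points on $X_{H'}$, so their $j$-invariants lie in a finite set. Each of those finitely many non-CM $j$ has open image, hence bounded index, completing uniformity.
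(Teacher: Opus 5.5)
Your argument is correct, but it is not how the paper gets the result. The paper gives no proof of its own: it cites \cite[Theorem 1.1]{CT13} when $m$ is a prime power and \cite[Proposition 6.1]{BELOV} for general $m$.

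\emph{The cited route.} Cadoret--Tamagawa prove a uniform open image theorem for an arbitrary (geometrically Lie perfect) $\ell$-adic representation of the fundamental group of a curve, one prime at a time. The heart of their work is an abstract argument that the covers $X_H$ attached to ever smaller open subgroups $H$ eventually have too few points of bounded degree. The passage to several primes is then carried out separately in \cite{BELOV}.

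\emph{Your route.} You specialize to the modular family, where Abramovich's bound $\gon(X_H)\geq \frac{7}{800}[\operatorname{PSL}_2(\Z):\overline{\Gamma}_H]$ holds for every congruence subgroup. This gives gonality growth immediately, linear in the index. Because the bound applies verbatim to open subgroups of $\prod_{p\mid m}\GL_2(\Z_p)$, you bound the index of the whole $m$-adic image at once. So your opening reduction to a single prime plus Goursat is never actually used. If you did want that reduction, it works: the common quotient in Goursat's lemma has a normal $\ell$-subgroup of bounded index and a normal prime-to-$\ell$ subgroup of bounded index. These intersect trivially, so the quotient has bounded order.

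\emph{Comparison.} The citation buys generality beyond elliptic curves. Your route buys a short, self-contained proof that treats all primes dividing $m$ simultaneously, using only Abramovich, Frey/Faltings, and Serre's open image theorem.

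Two details should be tightened.

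First, the claim that a subgroup of huge index lies in some $H'$ of index ``just above the threshold'' needs a bounded-jump argument. The cleanest is the chain $G_E\cdot\ker\bigl(\GL_2(\Z_m)\to\GL_2(\Z/m^n\Z)\bigr)$ for $n\geq 0$. Its indices grow by a factor of at most $|\GL_2(\Z/m\Z)|$ at the first step and at most $m^4$ afterwards. Hence some term lands in a fixed finite window above the threshold, and only finitely many subgroups lie in that window.

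Second, you should not restrict to $H$ with surjective determinant. The group $\det\rho_E(G_k)=\chi(G_k)$ is a proper subgroup when $k$ meets $\Q(\mu_{m^\infty})$. Instead, view $X_H$ as a geometrically connected curve over the fixed field of $\det H$ in $\Q(\mu_{m^\infty})$. That field lies in $k\cap\Q(\mu_{m^\infty})$, so it has degree at most $d$, and you can apply Frey's criterion over it. This is harmless, since Abramovich's bound is on geometric gonality.
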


\begin{proof}
    If $m$ is a power of a single prime, this is \cite[Theorem 1.1]{CT13}. The proof of the general statement is given in \cite[Proposition 6.1]{BELOV}.
\end{proof}

In particular, for a fixed prime $p$, there are only finitely many groups that can occur as $\im \rho_{E,p^{\infty}}$ for all non-CM elliptic curves defined over number fields of a fixed degree. In the case of non-CM elliptic curves over $\Q$, the classification of $p$-adic images is complete for $p=2,3, 13$ and 17 by \cite{2adicImage,RouseSutherlandZB22,3adicImage}. Throughout, we use the labels introduced by Rouse, Sutherland, and Zureick-Brown \cite{RouseSutherlandZB22} to refer subgroups of $\GL_2(\Z_p)$ which can arise as $\im \rho_{E,p^{\infty}}$ for non-CM elliptic curves $E/\Q$. These labels are of the form \texttt{N.i.g.n}, where $N$ denotes the level, $i$ denotes group index in $\GL_2(\Z_p)$, $g$ denotes the genus of the associated modular curve, and $n$ distinguishes groups with the same values for $N,g,i$.

\subsection{Modular Curves} The modular curve $Y_1(N)$ is an affine curve over $\Q$ whose $k$-rational points correspond to elliptic curves $E/k$ with a point $P \in E(k)$ of order $N$. If $N \geq 4$, then $Y_1(N)(k)$ is in bijection with $k$-isomorphism classes of such pairs, $(E,P)_k$. Taking the projective closure of $Y_1(N)$ adds a finite number of cusps, which do not correspond to elliptic curves, resulting in the smooth projective curve $X_1(N)$.   Throughout we view $X_1(N)$ as a curve defined over $\Q$.

If $x\in X_1(N)$ is a closed point, then its \textbf{degree} is defined to be the degree of its residue field, denoted $\Q(x)$. It is also the length of the Galois orbit of the point in $X_1(N)(\overline{\Q})$ corresponding to $x$. Note that if $E/k$ is an elliptic curve with $P \in E(k)$ of order $N$, then the degree of the associated closed point $x=[E,P] \in X_1(N)$ may be less than the degree of $k$. However, there is always $E'$ with $j(E')=j(E)$ which is defined over $\Q(x)$ and has a $\Q(x)$-rational point of order $N$. For more details, see \cite[$\S3.2.3$]{Liu2002} and \cite[p. 274, Proposition VI.3.2]{DR}. Explicit examples illustrating the distinction between closed points, $k$-rational points, and geometric points of $X_1(N)$ can be found in \cite[$\S2.3$]{Algorithm2025}.

We define the modular curve $X_0(N)$ in an analogous way. There is an affine curve $Y_0(N)$ over $\Q$ whose $k$-rational points correspond to elliptic curves $E/k$ with a $k$-rational isogeny $\varphi:E \rightarrow E'$ which is cyclic of degree $N$. Since $k$-rational cyclic $N$-isogenies from $E/k$ are in bijection with $k$-rational cyclic subgroups of $E$ of order $N$, this provides an alternate moduli interpretation for the points of $Y_0(N)$. Taking the projective closure of $Y_0(N)$ gives the modular curve $X_0(N)$. Unlike $X_1(N)$ for $N \geq 4$, the curve $X_0(N)$ is not a fine moduli space.

We often make use of the following well-known degree formulas for maps between modular curves.

    \begin{proposition}\label{prop:degree}
                For positive integers $a$ and $b$, there is a natural $\Q$-rational map $f \colon X_1(ab) \rightarrow X_1(a)$ sending $[E,P]$ to $[E, bP]$ with
                \[
                    \deg(f)=
                    c_f\cdot b^2 \prod_{p \mid b, p \nmid a}\left( 1-\frac{1}{p^2} \right).             \]
                    Here, $c_f=1/2$ if $a \leq 2$ and $ab>2$ and $c_f=1$ otherwise. Similarly, there is a natural $\Q$-rational map $g:X_0(ab) \rightarrow X_0(a)$ sending $[E, \langle P \rangle]$ to $[E, \langle bP \rangle]$ with
        \begin{align*} \deg(g) =
            b \prod_{p \mid b, p \nmid a}\left(1 + \frac{1}{p}\right).
        \end{align*}
          \end{proposition}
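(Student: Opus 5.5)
The plan is to prove the two degree formulas by computing, for each map, the degree of the corresponding extension of function fields. Over $\overline{\Q}$ this degree is the index of the relevant congruence subgroups acting on the upper half plane, taken modulo $\pm1$.

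First the map $f$. It is $\Q$-rational because $[E,P]\mapsto[E,bP]$ is defined by the moduli interpretation and commutes with the Galois action. Over $\mathbb{C}$, the curve $X_1(N)$ is $\Gamma_1(N)\backslash\mathcal{H}^*$, and $f$ is induced by the inclusion $\Gamma_1(ab)\subseteq\Gamma_1(a)$. Hence
\[
\deg f=[\pm\Gamma_1(a):\pm\Gamma_1(ab)].
\]
To evaluate this index, use the standard count
\[
[\SL_2(\Z):\Gamma_1(N)]=N^2\prod_{p\mid N}\left(1-\frac{1}{p^2}\right),
\]
which comes from counting the points of exact order $N$ in $(\Z/N\Z)^2$, i.e. the possible bottom rows $(c,d)$ of a matrix in $\SL_2(\Z/N\Z)$. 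Taking the ratio of this count for $N=ab$ and $N=a$ gives
\[
[\Gamma_1(a):\Gamma_1(ab)]=b^2\prod_{p\mid b,\,p\nmid a}\left(1-\frac{1}{p^2}\right),
\]
because the primes dividing $a$ contribute the same Euler factor to both counts and cancel.

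Next, pass to the images in $\mathrm{PSL}_2$. We have $-I\in\Gamma_1(N)$ exactly when $N\le2$. So if $ab>2$ and $a\le2$, then $-I$ lies in $\Gamma_1(a)$ but not in $\Gamma_1(ab)$, and the projective index is halved; this gives the factor $c_f=1/2$. In every other case either both groups contain $-I$ or neither does, and $c_f=1$.

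The map $g$ is handled the same way, using $[\SL_2(\Z):\Gamma_0(N)]=N\prod_{p\mid N}(1+1/p)$. This count is the number of cyclic subgroups of order $N$ in $(\Z/N\Z)^2$, equivalently the number of points of $\mathbb{P}^1(\Z/N\Z)$. Every $\Gamma_0(N)$ contains $-I$, so no correction factor appears. Taking the ratio for $N=ab$ and $N=a$ gives $b\prod_{p\mid b,\,p\nmid a}(1+1/p)$.

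The only delicate point is the $\pm I$ bookkeeping, together with checking that the moduli map corresponds to the group inclusion in the stated direction. Concretely, one should check that the coset of $P=1/(ab)$ in $\Z\tau+\Z$ maps to $b/(ab)=1/a$, and this is the case.
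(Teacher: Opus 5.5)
Your proof is correct and is essentially the paper's argument. The paper simply cites Diamond--Shurman (p.~66) for two facts: the degree equals the index $[\pm\Gamma_1(a):\pm\Gamma_1(ab)]$ (respectively $[\Gamma_0(a):\Gamma_0(ab)]$), and the standard index formulas for these congruence subgroups. You have written out exactly those details, including the $\pm I$ bookkeeping that produces the factor $c_f$.
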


          \begin{proof}
              The degree computations follow from \cite[p.66]{modular}.
          \end{proof}

\subsection{Isolated Points}\label{IsolatedPoints}
Let $C/k$ be a nice curve over a number field, i.e., that is smooth, projective, and geometrically integral. Assume for simplicity that $P_0\in C(k)$; for a more general setup, see \cite[$\S4$]{BELOV} or \cite{VirayVogt}. Then there is a natural map from from the $d$th symmetric product of $C$ to the curve's Jacobian,
\[
\Phi:\Sym^d(C) \rightarrow \Jac(C),
\]
defined by sending an effective degree $d$ divisor $D$ to the class $[D-dP_0]$. By identifying a degree $d$ closed point $x\in C$ with the sum of its associated Galois conjugates, we may view $x$ as an element of $\Sym^d(C)(k)$.

Let $x\in \Sym^d(C)(k)$ be a degree $d$ closed point. Then:
\begin{itemize}
    \item We say $x$ is \textbf{$\mathbb{P}^1$-parameterized} if $\Phi(x)=\Phi(y)$ for some $y\in \Sym^d(C)(k)$ with $y \neq x$. In this case, there exists a degree $d$ map $f: C \rightarrow \mathbb{P}^1$ with $x \in f^{-1}(\mathbb{P}^1(k))$.
    \item We say $x$ is \textbf{AV-parameterized} if there exists a positive rank abelian subvariety $A/k$ of $\Jac(C)$ with $\Phi(x)+A \subseteq \im(\Phi)$.
    \item If $x$ is neither $\mathbb{P}^1$-parameterized nor AV-parameterized, then it is \textbf{isolated}.
    \item If there are only finitely many points of degree at most $\deg(x)$, then $x$ is \textbf{sporadic}.
\end{itemize}

By the Riemann-Roch Theorem, any isolated point has degree at most the genus of $C$. Since degree $d$ points that are \emph{not} $\mathbb{P}^1$-parameterized map injectively into $\Jac(C)(k)$, it is a consequence of a theorem of Faltings \cite{faltings} that there are only finitely many isolated points of any fixed degree. Taken together, these imply $C$ has only finitely many isolated points of any degree. Moreover, any parameterized point gives an infinite family of points of the same degree. For $\mathbb{P}^1$-parameterized points, this is Hilbert's Irreducibility Theorem \cite[Chapter 9]{serre97}. For AV-parameterized which are $\mathbb{P}^1$-isolated, this is proven in \cite[Theorem 4.2]{BELOV}, but see \cite[Proposition 4.3.5]{VirayVogt} for a more self-contained reference. We summarize this in the theorem below.
\begin{theorem}[Faltings \cite{faltings}, Bourdon, Ejder, Liu, Odumodu, Viray \cite{BELOV}]\label{IsolThm}
\leavevmode
    \begin{enumerate}
        \item There are only finitely many isolated points on $C$.
        \item There are infinitely many degree $d$ points if and only if there is a degree $d$ parameterized point.
    \end{enumerate}
\end{theorem}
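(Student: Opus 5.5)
The plan is to prove the two parts separately. Part (1) combines Riemann--Roch with Faltings' theorem on subvarieties of abelian varieties. Part (2) uses Hilbert irreducibility, on $\mathbb{P}^1$ and then, in the AV case, on a family parameterized by a positive-rank abelian variety.

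\emph{Part (1).} First I will reduce to finitely many degrees. Suppose $x$ has degree $d>g$, where $g$ is the genus of $C$. By Riemann--Roch, $\ell(x)\geq d-g+1\geq 2$. Hence the fiber of $\Phi$ through $x$ is a projective space of dimension at least one, and its $k$-points give $y\neq x$ with $\Phi(y)=\Phi(x)$. So $x$ is $\mathbb{P}^1$-parameterized, and it suffices to show that for each $d\leq g$ there are only finitely many isolated points of degree $d$.

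Fix such a $d$ and put $W_d=\im(\Phi)$. By Faltings, $W_d(k)=\bigcup_{i=1}^n (w_i+B_i(k))$ for finitely many abelian subvarieties $B_i$ and points $w_i$ with $w_i+B_i\subseteq W_d$. Now take an isolated $x$ of degree $d$, and choose $i$ with $\Phi(x)=w_i+b$ for some $b\in B_i(k)$. Then $\Phi(x)+B_i=w_i+B_i\subseteq W_d$. If $B_i(k)$ had positive rank, $x$ would be AV-parameterized, so $B_i(k)$ must be finite. Thus $\Phi(x)$ lies in the finite set $\bigcup_{\operatorname{rk}B_i(k)=0}(w_i+B_i(k))$. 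Since $x$ is not $\mathbb{P}^1$-parameterized, $\Phi$ is injective at $x$, which bounds the number of isolated degree $d$ points. Taking all $d\leq g$ gives (1).

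\emph{Part (2), forward direction.} Suppose there are infinitely many degree $d$ points. By (1) only finitely many of them are isolated, so at least one is parameterized.

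\emph{Part (2), $\mathbb{P}^1$-parameterized case.} Suppose $x$ is $\mathbb{P}^1$-parameterized. Then the linear system through $x$ gives a degree $d$ map $f\colon C\to\mathbb{P}^1$ over $k$ with $x$ a fiber over a $k$-point; if the system has base points, I would first pass to a pencil containing $x$. Hilbert irreducibility applied to $f$ produces infinitely many $t\in\mathbb{P}^1(k)$ whose fiber is a single closed point of degree $d$.

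\emph{Part (2), AV-parameterized but $\mathbb{P}^1$-isolated case (main obstacle).} Here $\Phi(x)+A\subseteq W_d$ with $\operatorname{rk}A(k)>0$. I would consider
\[
Z=\Phi^{-1}(\Phi(x)+A)\subseteq \Sym^d(C).
\]
Because $x$ is $\mathbb{P}^1$-isolated, the fiber of $\Phi$ over $\Phi(x)$ is a single point. By upper semicontinuity of fiber dimension, the component $Z_0$ of $Z$ through $x$ maps birationally onto $\Phi(x)+A$. So $Z_0(k)$ contains infinitely many points, corresponding to a Zariski dense subset of a coset of a positive-dimensional abelian subvariety.

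What remains is to show that infinitely many of these effective divisors are irreducible, that is, a single Galois orbit of size $d$. The reducible divisors lie in the images of $\Sym^e(C)\times\Sym^{d-e}(C)$, but they need not form a proper closed subset in a naive sense, since rational points can lie on thin sets. The clean route is a Hilbert irreducibility theorem for abelian varieties (Corvaja--Zannier type). Apply it to the cover of $\Phi(x)+A$ obtained from $C^d\to\Sym^d(C)$ restricted over $Z_0$. This cover is irreducible over $k$ because its fiber over $x$ is a single Galois orbit, so its monodromy is transitive, and it is ramified where needed. The conclusion is that outside a thin set, the specializations remain irreducible of degree $d$. Since $A(k)$ has positive rank, it is not thin, which yields infinitely many degree $d$ points. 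I expect verifying the hypotheses of this irreducibility theorem, or instead reproducing the argument of \cite[Theorem 4.2]{BELOV} and \cite[Proposition 4.3.5]{VirayVogt}, to be the main difficulty.
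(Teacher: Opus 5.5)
Your Part (1), the forward direction of Part (2), and the $\mathbb{P}^1$-parameterized case follow the paper's sketch: Riemann--Roch bounds the degree, Faltings plus injectivity of $\Phi$ at $\mathbb{P}^1$-isolated points gives finiteness, and Hilbert irreducibility on $\mathbb{P}^1$ handles that case. Base points are in fact automatic there. If $y$ contained one geometric point of $x$, Galois invariance would force $y\geq x$, hence $y=x$; so the pencil spanned by $x$ and $y$ is base-point free.

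For the AV-parameterized, $\mathbb{P}^1$-isolated case the paper gives no argument and simply cites \cite[Theorem 4.2]{BELOV} and \cite[Proposition 4.3.5]{VirayVogt}. Your fallback is therefore what the paper does, but the route you actually propose has a genuine gap.

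\emph{The thinness claim is false.} The claim ``since $A(k)$ has positive rank, it is not thin'' does not hold. By weak Mordell--Weil, $A(k)$ is the union of the finitely many sets $a+2A(k)$. Each is the image of $A(k)$ under the degree $4^{\dim A}$ cover $y\mapsto 2y+a$. So $A(k)$ is always thin, and abelian varieties never have the Hilbert property.

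\emph{Ramification is the unproved crux.} The Corvaja--Zannier-type result (Corvaja--Demeio--Javanpeykar--Lombardo--Zannier) is only a \emph{weak} Hilbert property: it controls images of \emph{ramified} covers. The covers detecting reducible fibers are the components of $(\Sym^e C\times\Sym^{d-e}C)\times_{\Sym^d C}Z_0\to Z_0$. After normalization these could a priori be étale over $\Phi(x)+A$, i.e.\ isogenies up to translation. Their images on $k$-points would then be finite-index cosets, and nothing in your proposal rules this out or handles it. So ``ramified where needed'' is exactly the step that is missing.

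\emph{The cover is also the wrong one.} The fiber of $C^d|_{Z_0}$ over $x$ is the set of orderings of the $d$ conjugates. This is a single Galois orbit only when the Galois closure of $k(x)/k$ has group $S_d$. The relevant degree $d$ cover is the universal divisor.

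What closes the gap, without any Hilbert irreducibility, is Faltings once more. Write $W_e$ for the image of $\Sym^e(C)$ under $D\mapsto[D-eP_0]$. Suppose there were only finitely many degree $d$ points. Since $P_0\in C(k)$, every $k$-point of $\Phi(x)+A\subseteq W_d$ is $\Phi(D)$ for some $k$-rational effective $D$. Each such $D$ is either one of those finitely many points, or $D_1+D_2$ with $D_1,D_2$ effective and $k$-rational of degrees $e,d-e\geq1$. Hence
\[
\Phi(x)+A(k)\subseteq F\cup\bigcup_{e=1}^{d-1}\bigl(W_e(k)+W_{d-e}(k)\bigr),\qquad F\text{ finite}.
\]
By Faltings each $W_e(k)$ is a finite union of cosets $u+B(k)$. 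Intersecting with $\Phi(x)+A(k)$, this coset of the infinite group $A(k)$ is covered by finitely many cosets of subgroups of $A(k)$; the points of $F$ count as cosets of the trivial subgroup, which has infinite index. By B.~H.~Neumann's lemma the infinite-index cosets can be discarded. So $\Phi(x)$ itself lies in some $W_e(k)+W_{d-e}(k)$, i.e.\ $\Phi(x)=\Phi(D_1+D_2)$. Since $x$ is $\mathbb{P}^1$-isolated, $x=D_1+D_2$, contradicting that $x$ is a single closed point of degree $d$.

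The arithmetic input your approach lacked is precisely that $\Phi(x)$ lies outside the images of the splitting covers. This is what neutralizes any unramified ones.
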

\noindent In particular, every sporadic point is isolated, but the converse need not hold. 
\begin{figure}[h]

\hspace*{2.5cm}  \includegraphics[width=.8\linewidth]{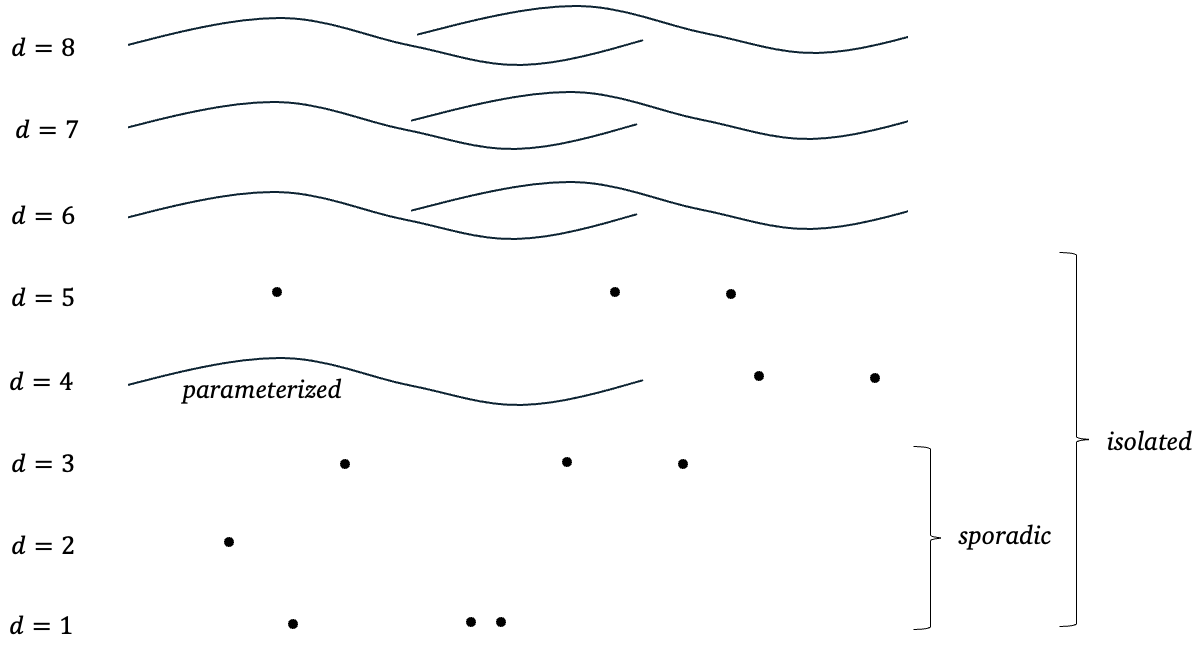} 
\caption{A hypothetical curve of genus 7 with points of degree $d$.}
\end{figure}

We will often make use of the following result which allows one to show the image of an isolated point is isolated under certain conditions.

\begin{theorem}[Bourdon, Ejder, Liu, Odumodu, Viray \cite{BELOV}]\label{FiberThm}
    Let $f:C \rightarrow D$ be a finite map of curves, and let $x\in C$ be an isolated point. If
    \[
    \deg(x)=\deg(f) \cdot \deg(f(x)),
    \]
    then $f(x)\in D$ is isolated.
\end{theorem}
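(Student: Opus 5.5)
The plan is to prove the contrapositive: if $y:=f(x)$ is not isolated on $D$, then $x$ is not isolated on $C$. The first step is to interpret the degree hypothesis. Put $e=\deg(y)$ and $n=\deg(f)$. The pullback divisor $f^*y$ is effective, defined over $k$, and of degree $ne=\deg(x)$. Its support contains the Galois orbit of $x$, since $f(x)=y$. Because $x$ as a divisor already has degree $ne$, we get the equality of divisors $f^*y=x$. In other words, the fiber over $y$ is exactly the single closed point $x$, with no other points and no multiplicity. This identity is what lets us transport parametrizations of $y$ to parametrizations of $x$.

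Next I would set up functoriality. Pullback of divisors gives a morphism $f^*:\Sym^e(D)\to\Sym^{ne}(C)$ sending $y\mapsto x$, and a homomorphism $f^*:\Jac(D)\to\Jac(C)$. Using the base point $P_0\in D(k)$ for $\Phi_D$ and the degree-$n$ divisor $f^*P_0$ as the base for $\Phi_C$ in degree $ne$, the square $\Phi_C\circ f^*=f^*\circ\Phi_D$ commutes. To allow general base points (or the framework of \cite{BELOV} without a rational point), I would note that changing the base divisor only translates $\im(\Phi_C)$ by a fixed $k$-rational class. Neither parametrization notion depends on this choice.

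Then I treat the two ways $y$ can fail to be isolated.
\begin{itemize}
\item \emph{$y$ is $\mathbb{P}^1$-parameterized.} There is $y'\in\Sym^e(D)(k)$ with $y'\neq y$ and $y'\sim y$. Then $f^*y'\sim f^*y=x$, and $f^*y'$ is effective, $k$-rational and of degree $ne$. Moreover $f^*y'\neq x$, because $f_*f^*=n\cdot$ on divisors, so $f^*$ is injective on divisors. Hence $x$ is $\mathbb{P}^1$-parameterized.
\item \emph{$y$ is AV-parameterized by a positive-rank $A\subseteq\Jac(D)$.} Let $B=f^*(A)\subseteq\Jac(C)$. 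Since $f_*\circ f^*=[n]$ on $\Jac(D)$, the restriction $f^*|_A$ has finite kernel. So $B$ is an abelian subvariety over $k$ with $\dim B=\dim A$, and $B(k)\supseteq f^*(A(k))$ is infinite, so $B$ has positive rank. For any $a\in A$, write $\Phi_D(y)+a=\Phi_D(z)$ with $z\in\Sym^e(D)$. Then
\[
\Phi_C(x)+f^*a=f^*\Phi_D(z)=\Phi_C(f^*z).
\]
This gives $\Phi_C(x)+B\subseteq\im(\Phi_C)$, so $x$ is AV-parameterized.
\end{itemize}

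In either case $x$ is not isolated, which is the contradiction we want.

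I expect the only delicate points to be bookkeeping. The first is checking that $f^*y=x$ holds as divisors, which is where the degree hypothesis is used. The second is matching base points so that the square with $\Phi$ commutes. The positive-rank claim for $B$ follows cleanly from the identity $f_*f^*=[n]$.
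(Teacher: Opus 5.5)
Your proof is correct and is essentially the argument of \cite{BELOV}, which the paper cites without reproducing. There, too, the degree hypothesis forces $f^*y=x$ as divisors. Pullback on symmetric powers and on Jacobians then carries a $\mathbb{P}^1$- or AV-parameterization of $y$ to one of $x$, and $f_*\circ f^*=[\deg f]$ guarantees that $f^*(A)$ still has positive rank.
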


\section{Implications Among Hypotheses}\label{Section3}
In this section, we prove the following implications among Hypotheses 1-4, as summarized in Figure \ref{ImplicationDiagram}.
\begin{theorem}\label{implication_thm} Let Hypotheses 1-4 as in $\S1.1$. Then:
   \begin{itemize}
       \item\cite[Corollary 1.7]{BELOV} Hypothesis 1 implies Hypothesis 2.
       \item Hypothesis 2 implies Hypothesis 3 and Hypothesis 4.
   \end{itemize}

\end{theorem}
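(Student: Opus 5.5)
The first bullet is \cite[Corollary 1.7]{BELOV}, so the work is to show that Hypothesis 2 implies Hypotheses 3 and 4. In both cases the idea is the same. Starting from a non-CM point of large level, we pass to a minimal level at which the associated point on $X_1(M)$ is isolated. Hypothesis 2 says only finitely many $j$-invariants can occur there, and each fixed non-CM $j$-invariant has bounded ``exceptional'' behavior by Serre's open image theorem. The bridge between the two is a general fact from \cite{BELOV}, which I would reprove in the form needed here.

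\emph{Key fact.} Let $x\in X_1(N)$ be non-CM with $[\Q(j(x)):\Q]=d$. If $\deg(x)$ is small relative to $\deg(X_1(N)\to X_1(1))$ times $d$, then some isolated point lies above $j(x)$ on some $X_1(M)$. The proof uses Riemann--Roch and Theorem~\ref{IsolThm}. On $X_1(1)\cong\mathbb{P}^1$ the point $j(x)$ is $\mathbb{P}^1$-parameterized. Choose $M\mid N$ minimal such that the image $x_M\in X_1(M)$ satisfies $\deg(x_M)<\deg(X_1(M)\to X_1(1))\cdot d$, meaning $x_M$ is not a full pullback of $j(x)$. If $x_M$ is not isolated, it is either $\mathbb{P}^1$-parameterized or AV-parameterized.

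\begin{itemize}
\item In the $\mathbb{P}^1$-parameterized case, I would argue that $\deg(x_M)\ge \gon_{\Q}(X_1(M))$.
\item In the AV-parameterized case, I would argue, following the argument of \cite[Theorem 4.3 and \S5]{BELOV}, that the abelian subvariety must come from a pullback along a degeneracy map to a lower level $X_1(M')$ with $\deg(x_M)=\deg(f)\deg(x_{M'})$. This contradicts minimality.
\end{itemize}

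The $\mathbb{P}^1$ case needs a gonality bound $\gon_\Q(X_1(M))\gg M^2$ (Abramovich), which forces $\deg(x_M)\gg M^2$. So either $x_M$ is isolated, or $\deg(x_M)$ is at least a constant times the index of $\Gamma_1(M)$.

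\emph{Hypothesis 3.} Suppose $X_0(N)$ has a non-cuspidal non-CM point $y$ of degree $d$ with $N$ large, and lift to $x\in X_1(N)$ above it. Its degree is at most $d\cdot\varphi(N)$, because the Galois action on a cyclic subgroup gives a character. This is far below $\deg(X_1(N)\to X_1(1))\sim N^2$ and below the gonality $\gg N^2$. Hence some prime-power-supported $M\mid N$ with large $M$ carries an isolated point above $j(y)$. I would refine this by replacing $N$ by a large prime power divisor $p^a$ or a large prime $p\mid N$, and I would make sure $M$ grows with $N$ by applying the argument at level $N$ directly.

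By Hypothesis 2, $j(y)$ then lies in a finite set $S_{2d}$ (or $S_d$). For each non-CM $j\in S$, Serre's theorem together with Theorem~\ref{prop:UniformLevelFiniteSetPrimes} shows that cyclic $N$-isogenies over fields of degree $\le d$ exist only for bounded $N$. This gives $C(d)$.

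\emph{Hypothesis 4.} Given $E/F$ with $[\Q(j):\Q]=d$ and a point of order $N=\exp E(F)_{\tors}$, we get $x\in X_1(N)$ with $\deg(x)\le[F:\Q]$ (up to the factor $d$). Apply the dichotomy. If $x$ is not above an isolated point at a large level, then $[F:\Q]\ge\deg(x)\gg N^2$, which gives $N\le C[F:\Q]^{1/2}$. Otherwise $j(E)$ lies in the finite set from Hypothesis 2, and for each such $j$ the open image theorem gives $\deg(x)\ge c_j N^2$, uniformly after taking the minimum over the finite set. The bound on $\#E(F)_{\tors}$ follows the same way using $X_1(N_1,N_2)$-type points, or by combining the exponent bound with the Weil pairing, which forces $\mu_{N_1}\subset F$.

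\emph{Main obstacle.} The hard step is the dichotomy. It must hold uniformly in $N$ with constants depending only on $d$, and I must rule out AV-parameterized points at the minimal level without assuming rank information. I would first try to use the BELOV result that a non-isolated point maps, via a degree-preserving degeneracy map, to a non-isolated point. I would also use the $\mathbb{P}^1$ gonality bound to handle all $\mathbb{P}^1$ cases with an explicit $\gg M^2$ constant.
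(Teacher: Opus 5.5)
\textbf{There is a genuine gap: your ``key fact'' fails in the AV-parameterized case, and the rest of the argument rests on it.}

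Nothing in \cite{BELOV} says that a positive-rank abelian subvariety $A$ with $\Phi(x_M)+A\subseteq\im(\Phi)$ must be pulled back from a lower-level Jacobian along a degeneracy map. Theorem~\ref{FiberThm} only says that an isolated point $x$ with $\deg(x)=\deg(f)\deg(f(x))$ has isolated image. It says nothing about where the abelian variety parameterizing a non-isolated point comes from. The fallback you mention, that a non-isolated point maps under a degree-preserving map to a non-isolated point, is the converse of Theorem~\ref{FiberThm}, not its contrapositive, and it is not available.

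Even granting the dichotomy at the minimal level, it has the wrong shape. When $x_M$ is not isolated it only gives $\deg(x_M)\gg M^2$ for some divisor $M\mid N$ that may be small. That does not conflict with $\deg(x_M)\le d\varphi(N)/2$, so it does not force $j(y)$ to be isolated for Hypothesis 3. It also gives no bound $\deg(x)\gg N^2$ for Hypothesis 4. If you instead run the argument ``at level $N$ directly,'' as you suggest, the minimality contradiction disappears and the AV case is left entirely open.

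The missing ingredient is Frey's theorem \cite[Proposition 2]{frey}: if a curve has infinitely many points of degree at most $e$, then its $\Q$-gonality is at most $2e$. Combined with Theorem~\ref{IsolThm}(2), any point of degree less than $\frac12\gon_\Q$ is sporadic, hence isolated. So every non-isolated $x\in X_1(N)$ satisfies $\deg(x)\ge\frac12\gon_\Q(X_1(N))\ge\frac{7}{1600}[\mathrm{PSL}_2(\Z):\Gamma_1(N)]\gg N^2$ by Abramovich \cite{abramovich}. This is the uniform dichotomy you want, at level $N$ itself and with an absolute constant, and it never has to separate $\mathbb{P}^1$- from AV-parameterized points.

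With this, the paper's proof is short.
\begin{itemize}
\item For Hypothesis 3, the lift $x'$ has $\deg(x')\le d\varphi(N)/2<\frac12\gon_\Q(X_1(N))$ once $N$ is large. So $x'$ is itself isolated, $j(y)$ lies in the finite set of isolated $j$-invariants of degree at most $d$, and Serre's open image theorem bounds $N$.
\item For Hypothesis 4, a non-isolated $j(E)$ gives $\deg(x)\gg N^2$ directly. For the finitely many isolated ones, your per-$j$ open-image bound gives the same (the paper uses Lemma~\ref{Finite_Degree_Bounds} instead). Then $\#E(F)_{\tors}\le(\exp E(F)_{\tors})^2$ finishes.
\end{itemize}
The rest of your outline is fine.

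For the first bullet, the paper notes that \cite[Corollary 1.7]{BELOV} as stated relies on a stronger hypothesis than Hypothesis 1. So you should rerun its argument rather than cite it verbatim: take the uniform level from Theorem~\ref{prop:UniformLevelFiniteSetPrimes}, apply \cite[Theorem 5.1]{BELOV}, and then Theorem~\ref{FiberThm}.
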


We note that \cite[Corollary 1.7]{BELOV} as stated relies on a stronger conjecture than Hypothesis 1. As pointed out by Derickx \cite{DerickxPost}, the statement of \cite[Corollary 1.4]{BELOV} actually fails to hold over general number fields. However, the proof that Hypothesis 1 implies Hypothesis 2 still follows directly from the arguments found in \cite{BELOV}. Indeed, suppose Hypothesis 1 holds for $d \in \mathbb{Z}^+$. Let $m$ be the product of 2, 3, 5, and all primes $p$ with $p \leq C(d)$, and fix $M$ as in Theorem \ref{prop:UniformLevelFiniteSetPrimes} for $\mathcal{E}$ the set of all non-CM elliptic curves over number fields of degree $d$. Let $x\in X_1(N)$ be a non-cuspidal isolated point with $j(x)$ of degree $d$. Since there are only finitely many CM $j$-invariants of degree $d$, we may assume $j(x)$ is non-CM. Let $E/\Q(j(E))$ be an elliptic curve with $j(x)=j(E)$. Then by \cite[Theorem 5.1]{BELOV}, we have
\[
\deg(x)=\deg(f)\cdot \deg(f(x)),
\]
where $f:X_1(N)\rightarrow X_1(\gcd(N,M))$ is the natural map. By Theorem \ref{FiberThm}, the point $f(x)$ is isolated, and each modular curve $X_1(a)$ for $a \mid M$ has only finitely many isolated points by Theorem \ref{IsolThm}. The result follows.

It remains to prove Hypothesis 2 implies Hypotheses 3 and 4, which we do in $\S3.1$ and $\S3.3$, respectively.

\subsection{Implication for Hypothesis 3} Here we prove that Hypothesis 2 implies Hypothesis 3.
\begin{proposition}\label{Prop_Hyp2_3}
    Suppose there are only finitely many isolated $j$-invariants of degree $d$ for each $d\in \Z^+$. Then for each $d$ there exists a $C=C(d)$ such that for all $N>C$, the modular curve $X_0(N)$ has no non-cuspidal, non-CM points of degree $d$.
\end{proposition}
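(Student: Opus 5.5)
Given a degree $d$ non-CM, non-cuspidal point $y \in X_0(N)$, I want to produce an isolated point on some $X_1(N')$ whose $j$-invariant is $j(y)$, with $N'$ large compared to $N$. Then the finiteness in Hypothesis 2 applies. Since there are only finitely many isolated $j$-invariants of degree at most $d$, and each non-CM $j$ admits isogenies of only bounded cyclic degree over $\Q(j)$ by Serre's open image theorem (applied to a fixed model $E/\Q(j)$), this bounds $N$.

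\textbf{Step 1: lift to $X_1(N)$.} Let $y=[E,\langle P\rangle]$ with $E$ defined over a field $k=\Q(y)$ of degree $d$, and $\langle P\rangle$ a $k$-rational cyclic subgroup of order $N$. Then $\Gal_k$ acts on $\langle P\rangle$ through a character into $(\Z/N\Z)^\times$. So the point $x=[E,P]\in X_1(N)$ has degree at most $d\cdot\varphi(N)/2$, and possibly smaller.

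The natural map $X_1(N)\to X_0(N)$ has degree $\varphi(N)/2$ (for $N>2$). The gonality of $X_1(N)$ grows linearly in its genus, roughly $\gg N^2$ (Abramovich's bound $\gon \geq \frac{7}{800}[\SL_2(\Z):\Gamma_1(N)]$). Meanwhile $\deg x\le d\varphi(N)/2 \ll N$. Hence for $N$ large in terms of $d$, $x$ is not $\mathbb{P}^1$-parameterized, since a $\mathbb{P}^1$-parameterized point has degree $\geq \gon$.

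\textbf{Step 2: rule out AV-parameterization.} This is the main obstacle. I would use the standard criterion from \cite{BELOV}: if $\deg x$ is less than $\gon/2$, or more precisely if the $\mathbb{P}^1$-isolation bound holds, then it remains to exclude a positive-rank abelian variety translate. I expect the paper instead argues via the degree-$d'$ points on $X_1(N)$. By Theorem \ref{IsolThm}(2), if $x$ were parameterized there would be infinitely many points of degree $\deg x$ on $X_1(N)$. For degree $D \ll N$ and large $N$, the known results (the gonality bound together with Frey's theorem: infinitely many degree $D$ points forces $\gon\le 2D$) give a contradiction once $2d\varphi(N)/2 < \gon(X_1(N))$. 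That is, $dN<\frac{7}{800}\cdot\frac{N^2}{2}\prod(1-p^{-2})$, which holds for $N>C'(d)$. So by Theorem \ref{IsolThm}(2) there are only finitely many points of degree $\deg x$ on $X_1(N)$, and $x$ is therefore sporadic, hence isolated.

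\textbf{Step 3: conclude.} For $N>C'(d)$, $j(y)=j(x)$ is an isolated $j$-invariant of degree at most $d$. Hypothesis 2, applied for each degree $\le d$, gives a finite set $S$ of such non-CM $j$-invariants. For each $j\in S$, fix $E_j/\Q(j)$. By Serre's open image theorem there is $M_j$ such that $E_j$ has no cyclic $\overline{\Q}$-isogeny rational over any degree $\le d$ extension of $\Q(j)$ of degree exceeding $M_j$. This holds because the $\ell$-adic images are open, so the degree of the field of definition of a cyclic subgroup of order $N$ tends to infinity with $N$; twists do not change which cyclic subgroups are Galois-stable. Taking $C=\max(C'(d),M_j)$ finishes the proof.
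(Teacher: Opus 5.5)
Your proposal is correct and follows the paper's proof essentially step for step. You lift to $X_1(N)$ with degree at most $d\varphi(N)/2$, use Abramovich's gonality bound with Frey's criterion to conclude the lift is sporadic (hence isolated) for large $N$, and then bound $N$ for each of the finitely many resulting non-CM $j$-invariants via Hypothesis 2 and Serre's open image theorem.
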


\begin{proof}
    Let $x\in X_0(N)$ be a non-cuspidal, non-CM point of degree $d$. If $N \geq 3$, then $x$ lifts to a point $x'\in X_1(N)$ with $\deg(x') \leq d \cdot \varphi(N)/2$, where $\varphi$ denotes the Euler $\varphi$-function. For sufficiently large $N$, lower bounds on gonality due to Abramovich \cite[Theorem 0.1]{abramovich} imply $\deg(x')< \frac{1}{2}\gon_{\Q}(X_1(N))$. Thus $x'$ is sporadic --- and hence isolated -- by work of Frey \cite[Proposition 2]{frey}. By assumption, there are only finitely many isolated $j$-invariants of each degree and $\deg(j(x')) \leq d$, so $j(x')$ belongs to a finite set. By Serre's Open Image Theorem \cite{serre72}, each $j(x')$ gives a degree $d$ point on $X_0(N)$ for at most finitely many integers $N$.
\end{proof}

\begin{remark}
In fact, it is enough to assume Hypothesis 2 for modular curves of prime level.
Indeed, suppose there are only finitely many isolated $j$-invariants of each fixed degree associated to modular curves $X_1(p)$ for $p$ prime. Then the proof of Proposition \ref{Prop_Hyp2_3} shows the modular curve $X_0(p)$ has no non-cuspidal, non-CM points of degree $d$ for sufficiently large primes $p$.  Hypothesis 3 now follows from Theorem \ref{prop:UniformLevelFiniteSetPrimes}.
\end{remark}

\subsection{Preliminary Lower Bounds} We record the following technical lemma in more generality than is needed here so it can be applied later in the proof of Theorem \ref{PolynomialBoundsThm}.
\begin{lemma}\label{Finite_Degree_Bounds}
Fix $d \in \Z^+$, and let $\mathcal{E}$ be a collection of non-CM elliptic curves such that each $E\in \mathcal{E}$ is defined over $\Q(j(E))$ with $[\Q(j(E)):\Q]=d$. Let $S$ be a finite list of primes. Then there exists a constant $C=C(d)$ such that for any $x\in X_1(N)$ associated to $E \in \mathcal{E}$ with
\[
\Supp(N) \subseteq S \cup \{p: \im \rho_{E, p^{\infty}}=\GL_2(\Z_{p}) \}
\] one has
   \[
    \deg(x)\geq \frac{d}{C} \cdot \deg(X_1(N) \rightarrow X_1(1)).
    \]
\end{lemma}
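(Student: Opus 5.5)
Throughout, write $N = N_S \cdot N'$, where $\Supp(N_S)\subseteq S$ and every prime $p \mid N'$ satisfies $\im\rho_{E,p^\infty}=\GL_2(\Z_p)$. The plan is to compare $\deg(x)$ with the size of the Galois orbit of a point of order $N$ on $E$ over $k=\Q(j(E))$. Let $x=[E,P]$ with $P$ of order $N$ on some model over $\overline{\Q}$. Since $E$ is defined over $k$ and is non-CM, the point $x$ is the image of $(E,P)$, and $\Q(x)\supseteq k$. The residue field $\Q(x)$ is the field generated over $k$ by the $x$-coordinate (or Weber function value) of $P$. So we expect
\[
\deg(x)= d\cdot [\Gal_k\text{-orbit of } \pm P] = d\cdot \frac{\#\im\rho_{E,N}\cdot \#\{\pm1\}\text{ factor}}{\#\text{stabilizer of } \pm P}.
\]
Here $\deg(X_1(N)\to X_1(1))$ equals the size of the orbit of $\pm P$ under the full group $\GL_2(\Z/N\Z)$, up to the factor $c_f$ from Proposition~\ref{prop:degree}. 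It therefore suffices to bound the index $[\GL_2(\Z/N\Z):\im\rho_{E,N}]$ uniformly in $E\in\mathcal{E}$ and in $N$ of the allowed shape. The orbit of $\pm P$ under a subgroup $H$ has size at least the full orbit size divided by $[\GL_2:H]$.

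To bound the index, apply Theorem~\ref{prop:UniformLevelFiniteSetPrimes} with $m=\prod_{p\in S}p$ (enlarged to include $2,3,5$) and with $\mathcal{E}$ itself, whose curves lie over fields of degree $d$. This gives $M$ with $\Supp(M)\subseteq\Supp(m)$ and $\im\rho_{E,m^\infty}=\pi^{-1}(\im\rho_{E,M})$. Hence $[\GL_2(\Z_m):\im\rho_{E,m^\infty}]\le \#\GL_2(\Z/M\Z)=:C_1$, uniformly in $E$. For the primes in $N'$, each $p$-adic image is all of $\GL_2(\Z_p)$.

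The main obstacle is entanglement. Surjectivity on each factor does not give surjectivity on the product $\prod_{p\mid mN'}\GL_2(\Z_p)$. I would handle this with a Goursat-type argument using that for $p\ge 5$ the group $\SL_2(\Z_p)$ has no nontrivial abelian quotients, and that the only nonabelian Jordan–Hölder factor of $\GL_2(\Z_p)$ is $\mathrm{PSL}_2(\F_p)$. These are pairwise nonisomorphic for distinct $p$ and do not occur in $\GL_2(\Z_{\ell})$ for $\ell\neq p$ with $\ell\ge5$. Consequently the image $G$ of $\rho_{E,(mN')^\infty}$ satisfies the following: its intersection with $\prod_{p\mid N',p\ge5}\SL_2(\Z_p)$ is the full product. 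The commutator subgroup of $G\cap\SL_2$ on the $m$-part has index bounded by a function of $C_1$.

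The remaining discrepancy comes from determinant and abelian entanglements. The determinant is $\chi_{\mathrm{cyc}}|_{\Gal_k}$, whose image in $\widehat{\Z}^\times$ has index at most $d$. The abelian part of the interaction between the $m$-adic and $N'$-adic components factors through abelianizations. The abelianization of $\GL_2(\Z_p)$ for $p\ge5$ is $\Z_p^\times$ via the determinant, so these contributions are controlled by $d$ and $C_1$. Since $2,3\mid m$, the small primes lie inside the $S$-part, and $N'$ only involves primes $\ge5$.

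Combining these bounds gives $[\GL_2(\Z/N\Z):\im\rho_{E,N}]\le C$ with $C$ depending only on $d$ (and on $S$ and $\mathcal{E}$). The orbit comparison then yields $\deg(x)\ge \frac{d}{C}\deg(X_1(N)\to X_1(1))$, after absorbing the factor of $2$ from $c_f$ into $C$.
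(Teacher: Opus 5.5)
Your argument is correct in outline, but it follows a different route from the paper's proof.

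\textbf{The paper's route.} The paper never bounds the index of $\im\rho_{E,N}$. It writes $N=ab$ with $\Supp(a)\subseteq S$ and $\Supp(b)\cap S=\varnothing$. It then cites \cite[Proposition 5.7]{BELOV} to get $\deg(x)=\deg(f)\deg(f(x))$ for $f\colon X_1(N)\to X_1(a)$, and \cite[Proposition 5.8]{BELOV} to get $\deg(f(x))=\deg(g)\deg(g(f(x)))$ for $g\colon X_1(a)\to X_1(\gcd(a,M))$. It finishes with the trivial bound $\deg(g(f(x)))\ge d$, so $C=\deg(X_1(M)\to X_1(1))$.

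\textbf{Your route.} You redo the group theory yourself (Goursat, perfectness of $\SL_2(\Z_p)$ for $p\ge 5$, pairwise distinct factors $\mathrm{PSL}_2(\mathbb{F}_p)$) and finish by orbit counting. This is more self-contained, but it commits you to a stronger claim: that $[\GL_2(\Z/N\Z):\im\rho_{E,N}]$ is uniformly bounded. That claim is true. However, the one step you only assert---that determinant/abelian entanglement is ``controlled by $d$ and $C_1$''---is where real work would be needed. You would have to bound $[k(E[m^\infty])\cap\Q(\zeta_{(N')^\infty}):\Q]$, using your commutator-index bound together with the near-disjointness of $k(\zeta_{m^\infty})$ and $k(\zeta_{(N')^\infty})$ over $k$.

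\textbf{You can skip that step.} Put $H=\im\rho_{E,N}$, $H_S=\im\rho_{E,N_S}$ and $P=P_S+P'$. Your Goursat step already gives $\{1\}\times\SL_2(\Z/N'\Z)\subseteq H$. So if $hP=R+Q$ with $R\in E[N_S]$ and $Q\in E[N']$, then $R+sQ\in H\cdot P$ for every $s\in\SL_2(\Z/N'\Z)$. Since $\SL_2(\Z/N'\Z)$ is transitive on points of order $N'$, every fiber of $H\cdot P\to H_S\cdot P_S$ is as large as possible. Combined with $|H_S\cdot P_S|\ge \#\{\text{points of order } N_S\}/C_1$, this gives $|H\cdot P|\ge \#\{\text{points of order } N\}/C_1$. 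After accounting for $\pm P$, the lemma follows with $C=C_1$.

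This is exactly the mechanism behind \cite[Proposition 5.7]{BELOV} in the paper: abelian entanglement can shrink the image but not the fibers, because $\SL_2$ alone is already transitive. Your constant $\#\GL_2(\Z/M\Z)$ is cruder than the paper's, which uses fiber maximality above the level $M$ rather than an index bound on the $S$-part. That difference does not matter for the statement.
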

\begin{proof}
    We enlarge $S$ to include 2 and 3, if necessary. Let $\mathfrak{m}=\prod_{p \in S} p$. Then by Theorem \ref{prop:UniformLevelFiniteSetPrimes}, there exists a positive integer $M$ with $\Supp(M) \subseteq S$ such that for each $E_i/\Q(j(E_i))$ we have
    \[
    \im \rho_{E_i,\mathfrak{m}^{\infty}}=\pi^{-1}(\im \rho_{E_i,M}),
    \]
    where $\pi$ is the natural projection map.

    Let $N\in \Z^+$, and $x \in X_1(N)$ associated to $E_i$. Suppose \[\Supp(N) \subseteq S \cup \{p: \im \rho_{E_i, p^{\infty}}=\GL_2(\Z_{p})\}.\] Write $N=ab$, where $\Supp(a) \subseteq S$ and $\Supp(b) \cap S = \varnothing$. Since $b$ is the product of primes for which the $p$-adic Galois representation associated to $E_i$ is surjective, by \cite[Proposition 5.7]{BELOV} we have
    \[
\deg(x)=\deg(f)\cdot\deg(f(x)),
    \]
    where $f:X_1(N) \rightarrow X_1(a)$ is the natural map. By \cite[Proposition 5.8]{BELOV}, we have
    \[
    \deg(f(x))=\deg(g) \cdot \deg(g(f(x))),
    \]
    where $g: X_1(a) \rightarrow X_1(\gcd(a,M))$ is the natural map. Since $\deg(g(f(x))) \geq d$, the claim follows with
    \[
    C=\deg(X_1(M) \rightarrow X_1(1)). \qedhere
    \]
\end{proof}
\begin{proposition}\label{Prop_Isol_Lower_Bound}
    Let $d\in \Z^+$. Suppose there are only finitely many isolated $j$-invariants of degree $d$. Then there exists a constant $C=C(d)$ such that for any non-CM elliptic curve $E$ with $[\Q(j(E)):\Q]=d$ and $x\in X_1(N)$ with $j(x)=j(E)$ we have
    \[
    \deg(x) > C \cdot N^2.
    \]
\end{proposition}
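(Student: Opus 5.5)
Take $x\in X_1(N)$ with $j(x)=j(E)$ non-CM of degree $d$, and reduce to an isolated point. The map $X_1(N)\to X_1(1)$ has degree $\asymp N^2\prod_{p\mid N}(1-p^{-2})\gg N^2$, so it suffices to show that $\deg(x)$ is at least a fixed positive proportion of $\deg(X_1(N)\to X_1(1))$. Among all divisors $a\mid N$ for which the natural map $f_a\colon X_1(N)\to X_1(a)$ satisfies $\deg(x)=\deg(f_a)\cdot\deg(f_a(x))$, choose $a$ minimal; $a=N$ qualifies, so such an $a$ exists. If $f_a(x)$ is isolated, then $j(x)$ is an isolated $j$-invariant of degree $d$. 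If it is parameterized, it lies in an infinite family of points of the same degree by Theorem \ref{IsolThm}. I would argue that in this case, for $a$ minimal, one can still transfer control down to a level built from primes where the image is not surjective.

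I think a cleaner route is to split $N=ab$, where $a$ is supported on a finite set $S$ of primes and $b$ on primes where $\im\rho_{E,p^\infty}=\GL_2(\Z_p)$, and then apply Lemma \ref{Finite_Degree_Bounds}. The key is to produce, from Hypothesis 2 for degree $d$ alone, a finite set $S$ such that for every non-CM $E$ with $[\Q(j(E)):\Q]=d$ the $p$-adic image is surjective for $p\notin S$, except when $j(E)$ lies in a finite exceptional set. For this step I would first use the proof of Proposition \ref{Prop_Hyp2_3}. If $\rho_{E,p}$ is not surjective for a large prime $p$, then $E$ gives a point either on $X_0(p)$ or on a Cartan or exceptional modular curve. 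In the Borel case, lifting to $X_1(p)$ yields a point of degree at most $d(p-1)/2$, which is below half the gonality for large $p$ by Abramovich, hence sporadic by Frey and so isolated. The finiteness hypothesis then confines $j(E)$ to a finite set. Each of those finitely many non-CM $j$ has surjective $p$-adic image for all but finitely many $p$ by Serre's open image theorem, so those primes go into $S$. Exceptional images occur only for bounded $p$ once $d$ is fixed, and for $p$ large non-surjectivity mod $p$ is what matters, since surjective mod $p$ implies surjective $p$-adically for $p\ge5$.

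\textbf{Main obstacle.} The main obstacle is the Cartan case, where non-surjectivity does not give a low-degree point on $X_1(p)$. I would handle it through the torsion degree. If $\im\rho_{E,p}\subseteq C_{ns}^+(p)$, the orbit of a point of order $p$ has size at most $2(p^2-1)$, which is too large to get sporadicity directly. So instead I would look at points $x\in X_1(p)$ and use the fact that their degree is governed by the image. I would try to show that such a point of degree $\ll p^2/\text{const}$ is isolated once $p$ is large, which requires a gonality bound of order $p^2$ on $X_1(p)$, and Abramovich gives exactly $\gg p^2$. If that comparison fails, I would fall back on a direct argument: a point with non-maximal image has degree at most a fixed fraction of $\deg(X_1(p)\to X_1(1))$, and then compare against the gonality constant $7/800$. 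Once $S$ is fixed, Lemma \ref{Finite_Degree_Bounds} gives $\deg(x)\ge \frac{d}{C}\deg(X_1(N)\to X_1(1))\gg N^2$. The finitely many exceptional $j$-invariants are handled individually using Serre's theorem together with Theorem \ref{prop:UniformLevelFiniteSetPrimes}. The final constant is the minimum over all these cases.
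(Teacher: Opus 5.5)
Your second route has a genuine gap, and it sits exactly at the step you flagged. The intermediate claim you want is this: a finite set $S=S(d)$ such that, outside finitely many $j$-invariants, every non-CM $E$ with $[\Q(j(E)):\Q]=d$ has $\im\rho_{E,p^\infty}=\GL_2(\Z_p)$ for all $p\notin S$. That claim is at least as strong as Hypothesis 1 for this $d$, and the paper only proves Hypothesis 1 $\Rightarrow$ Hypothesis 2, not the converse.

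The non-split Cartan case cannot be reached by gonality. Suppose $\im\rho_{E,p}=C_{ns}^+(p)$ over $\Q(j(E))$. Then $C_{ns}(p)\cong\mathbb{F}_{p^2}^\times$ already acts transitively on $E[p]\setminus\{0\}$, so every point of $X_1(p)$ above $j(E)$ has degree $d\cdot\deg(X_1(p)\to X_1(1))$. On the other hand, $\gon_{\Q}(X_1(p))\le\deg(X_1(p)\to X_1(1))$ via the $j$-map. So Frey's criterion can never apply to such a point, whatever constant Abramovich provides. Your ``fixed fraction'' fallback also has nothing to act on, because the degree is maximal. This is the open core of Serre's uniformity problem already for $d=1$.

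Worse, for $d\ge 2$ your claim is false for determinant reasons. If $\Q(j(E))$ is the quadratic subfield of $\Q(\zeta_p)$, then $\det\rho_{E,p}$ takes values in the squares of $\mathbb{F}_p^\times$. Choosing a non-CM $j$ in that field for each odd prime $p$ gives infinitely many degree-$2$ $j$-invariants with arbitrarily large non-surjective primes.

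The missing idea is that you never need to control the Galois images of non-isolated $j$-invariants. If $j(E)$ is not an isolated $j$-invariant, then no point $x\in X_1(N)$ above it is isolated, so none is sporadic. Frey's criterion then forces $\deg(x)\ge\frac12\gon_{\Q}(X_1(N))$, and Abramovich's bound gives $\deg(x)\ge\frac{7}{1600}[\mathrm{PSL}_2(\Z):\Gamma_1(N)]>\frac{7}{3200\,\zeta(2)}N^2$, uniformly in $N$ and $E$. You used exactly this implication (low degree $\Rightarrow$ sporadic $\Rightarrow$ isolated) in your Borel step, but only to constrain images; the paper applies it directly to bound $\deg(x)$.

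Serre's open image theorem and Lemma \ref{Finite_Degree_Bounds} are then needed only for the finitely many isolated $j_1,\dots,j_k$ of degree $d$. Fix models $E_i/\Q(j_i)$, and take $S$ to be the primes up to some $p_0$ beyond which every $\rho_{E_i,p^\infty}$ is surjective. This gives $\deg(x)\ge\frac{d}{C_1}\deg(X_1(N)\to X_1(1))\gg N^2$, which is what your final sentence does. The constant $C(d)$ is the minimum of the two resulting constants.
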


\begin{proof}
    Let $j_1, j_2, \dots, j_k$ be the non-CM isolated $j$-invariants of degree $d$. For each $1 \leq i \leq k$, fix $E_i/\Q(j(E_i))$ with $j(E_i)=j_i$. By Serre's Open Image Theorem \cite{serre72}, there exists a prime $p_0>3$ such that the $p$-adic Galois representation of $E_i/\Q(j(E_i))$ is surjective for $p>p_0$. Taking $S=\{\text{primes} \leq p_0\}$ in Lemma \ref{Finite_Degree_Bounds}, there exists a constant $C_1=C_1(d)$ such that for any $N \in \mathbb{Z}^+$ and any $x=[E_i,P_i] \in X_1(N)$ we have 
    \begin{align*}
    \deg(x)&\geq \frac{d}{C_1} \cdot \deg(X_1(N) \rightarrow X_1(1))\\ &\geq\frac{d}{2 C_1} \cdot N^2 \prod_{p \mid N}\left( 1- \frac{1}{p^2} \right)\\
    & >\frac{d}{2 C_1} \cdot N^2 \prod_{p \text{ prime}}\left( 1- \frac{1}{p^2} \right) \\
    &=\frac{d}{2C_1}\cdot \frac{1}{\zeta(2)}\cdot N^2.
    \end{align*}
    Here $\zeta$ denotes the Riemann zeta function. Our lower bound argument is modeled after \cite[Proposition 5]{DedekindRef}.
    
    So now assume $j$ is not isolated, with $[\Q(j):\Q]=d$. Then in particular there is no sporadic point on $X_1(N)$ with $j(x)=j$. Thus by lower bounds on gonality due to Abramovich \cite{abramovich}, any point on $X_1(N)$ has degree at least \[\frac{1}{2}\gon_{\Q}(X_1(N)) \geq \frac{7}{1600}[\text{PSL}_2(\Z):\Gamma_1(N)]= \frac{7}{3200} N^2\prod_{p \mid N} \left(1-\frac{1}{p^2}\right) \cdot  >\frac{7}{3200}\cdot \frac{1}{\zeta(2) }\cdot N^2.\] 
    
   The statement holds with $C=\min\left\{\frac{d}{2C_1\cdot \zeta(2)},\frac{7}{3200\cdot \zeta(2)}\right\}$.
\end{proof}
\subsection{Implication for Hypothesis 4}
    Let $d\in \Z^+$. Suppose there are only finitely many isolated $j$-invariants of degree $d$. Suppose $E/F$ is a non-CM elliptic curve with $[\Q(j(E)):\Q]=d$ and $E(F)$ contains a point $P$ of order $N$. Then $\Q(x) \hookrightarrow F$, where $x=[E,P] \in X_1(N)$. By Proposition \ref{Prop_Isol_Lower_Bound}, there is a constant $C=C(d)$ depending on $d$ such that
    \[C\cdot N^2 < \deg(x) \leq [F:\Q].\]
 Since $\#E(F)_{\tors} \mid (\exp E(F)_{\tors})^2$, the bound on the full torsion subgroup follows. Taking the square root of both sides gives the desired bound on $N$.

\section{Rational Isolated $j$-invariants for $X_1(p^aq^b)$}\label{Section4}

    Here, we prove Theorem \ref{finiteness_2_primes}, which states that there are only finitely many rational isolated $j$-invariants associated to $X_1(p^aq^b)$, where $p,q$ are primes. Section \ref{prelim_Lemos} includes preliminary results which rely on work of Lemos \cite{lemosTrans,lemosZ}, whereas Section \ref{RamificationResults} identifies the contribution from ramification results of Smith \cite{Smith23}. These are applied to prove  Theorem \ref{finiteness_2_primes} in Section \ref{Finiteness_proof}.
\subsection{Consequences of Work of Lemos}\label{prelim_Lemos} 
\begin{lemma}\label{Lemma1}
Let $E/\Q$ be a non-CM elliptic curve, and let $p<q$ be primes with $q>37$. If $\im\rho_{E,q}=C_{ns}^+(q)$, then one of the following holds:
\begin{enumerate}
    \item There is a single closed point point on $X_1(p^a)$ associated to $E$, and it is of maximal degree.
    \item $p=2$ and the 2-adic image has RSZB label 4.8.0.2, 4.16.0.2, or 8.16.0.3.
\end{enumerate} 
\end{lemma}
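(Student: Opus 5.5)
The plan is to use Lemos's results to sharply restrict the mod $p^a$ image of $E$, and then check transitivity of that image on the points of order $p^a$. The input from \cite{lemosTrans,lemosZ} is the following. If a non-CM $E/\Q$ has a nontrivial rational cyclic isogeny, or has mod $\ell$ image contained in the normalizer of a split Cartan subgroup for some prime $\ell$, then $\rho_{E,q}$ is surjective for every $q>37$. By hypothesis $\im\rho_{E,q}=C_{ns}^+(q)$ with $q>37$, which is not surjective. Hence $E$ has no rational isogeny of any degree and no split-Cartan-normalizer image at any prime. In particular $\im \rho_{E,p}$ is not contained in a Borel subgroup or in $C_s^+(p)$.

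Next I translate condition (1) into group theory. The closed points of $X_1(p^a)$ above $j(E)$ correspond to orbits of $\pm\im\rho_{E,p^a}$ on the points of exact order $p^a$ in $E[p^a]$. The maximal degree is $\deg(X_1(p^a)\to X_1(1))$. So (1) says exactly that $\pm\im\rho_{E,p^\infty}$ acts transitively on the primitive vectors of $(\Z/p^a\Z)^2$. I would prove this transitivity for all $a$ by splitting into cases on $p$.
\begin{itemize}
\item \emph{Case $p\geq 19$.} Theorem \ref{SerreUnifProgress} applies, since the two exceptional $j$-invariants have $37$-isogenies and are already excluded. So $\im\rho_{E,p}$ is $\GL_2(\Z/p\Z)$ or $C_{ns}^+(p)$, and both act transitively on $\mathbb{F}_p^2\setminus\{0\}$. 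To lift this to level $p^a$, it suffices that the $p$-adic image contains $I+pM_2(\Z_p)$. For surjective mod $p$ image with $p\geq5$, this is Serre's classical lemma. For $C_{ns}^+(p)$, I would show directly that the kernel of reduction in the $p$-adic image acts transitively on the fibres of $(\Z/p^a)^2\to(\Z/p)^2$ over a primitive vector. If that fails, I would appeal to the known fact that a non-split Cartan mod $p$ image has $p$-adic image equal to the full preimage. This uses the congruence-level argument of \cite[Prop.~5.7--5.8]{BELOV}, and I would need to verify it.
\item \emph{Case $5\le p\le 17$.} I use Zywina's classification \cite{ZywinaImages} and its extensions to $p=13,17$, after removing Borel and split-Cartan images. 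What remains is surjective images, non-split Cartan normalizers, and exceptional images such as $S_4$ at $p=5,13$. The exceptional ones arise from finitely many $j$-invariants, or from families whose $q$-adic image is known by Lemos to be surjective. Otherwise the orbit is too small, and I would rule it out, or show transitivity still holds, by direct inspection. For the $p$-adic lift I again use the $p$-adic image classification.
\item \emph{Case $p=2,3$.} I use the complete RSZB $2$-adic and $3$-adic classifications \cite{2adicImage,RouseSutherlandZB22,3adicImage}. I discard every label whose group fixes a line mod $p$ (giving an isogeny) or lies in a split Cartan normalizer. For each remaining label I compute whether $\pm H$ acts transitively on primitive vectors of every level. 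This is a finite computation, because beyond the level of $H$ the image is the full preimage and transitivity propagates. I expect no failures for $p=3$, and for $p=2$ exactly the three labels \texttt{4.8.0.2}, \texttt{4.16.0.2}, \texttt{8.16.0.3}, which gives (2).
\end{itemize}

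The main obstacle is this last case analysis. It requires a careful, essentially computational pass through the RSZB database, in which one must correctly identify which groups force a rational isogeny or split-Cartan structure and which give intransitive actions at higher level. A second point of care is justifying the $p$-adic lift in the non-split Cartan case for large $p$.
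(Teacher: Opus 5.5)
Your reduction to transitivity of $\pm\im\rho_{E,p^\infty}$ on primitive vectors matches the paper. So does your use of Lemos to exclude Borel and split-Cartan images mod $p$, and your handling of surjective mod $p$ images, $p=2,3$, and the $S_4$ cases. The gap is the case $\im\rho_{E,p}=C_{ns}^+(p)$ with $p$ odd. There is no known result that the $p$-adic image is then the full preimage of $C_{ns}^+(p)$. Nor does \cite[Prop.~5.7--5.8]{BELOV} supply one: 5.7 needs a surjective $p$-adic image, and 5.8 presupposes a known level for the image.

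What is known, and what the paper uses via \cite[Theorem 6.5, Proposition 2.2]{Furio2024}, is that the $p$-adic image $G$ is one of two kinds:
\begin{enumerate}
\item the full preimage of $C_{ns}^+(p^n)$ for some $n\ge 1$;
\item a specific level-$p^2$ subgroup of index $p$ in $\pi^{-1}(C_{ns}^+(p))$, whose kernel of reduction mod $p^2$ is $\{I+p\begin{pmatrix}a&\epsilon b\\-b&c\end{pmatrix}\}$, with $\epsilon$ a non-residue.
\end{enumerate}
Neither kind can currently be ruled out. For $n\ge 2$, and in the second case, $G$ does not contain $I+pM_2(\Z_p)$, so the criterion you aim for cannot be established. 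Your fallback, showing directly that the kernel of reduction acts transitively on fibres, also cannot be carried out without such a classification. A priori that kernel is just some open subgroup of $I+pM_2(\Z_p)$ of unknown shape, with no uniform control on its level as $p$ varies.

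The missing step is to verify transitivity for these specific groups.
\begin{itemize}
\item For $C_{ns}^+(p^n)$, identify $(\Z/p^n\Z)^2$ with $\mathcal{O}/p^n\mathcal{O}$, where $\mathcal{O}$ is the ring of integers of the unramified quadratic extension of $\Q_p$. The Cartan $(\mathcal{O}/p^n\mathcal{O})^\times$ acts simply transitively on the units, which are exactly the primitive vectors. Beyond level $p^n$, transitivity propagates as in your $p=2,3$ argument. The paper instead cites \cite{BourdonEjder} together with \cite[Theorem 5.1]{BELOV}.
\item For the level-$p^2$ group, the element $I+p\begin{pmatrix}a&\epsilon b\\-b&c\end{pmatrix}$ sends $e_1$ to $e_1+p\,(a,-b)^{T}$. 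So the kernel acts transitively on the $p^2$ lifts of $e_1$, with stabilizer of order $p$. Combined with the transitivity of $C_{ns}^+(p)$ on nonzero vectors mod $p$, this gives a single orbit at level $p^2$, and the level-$p^2$ property propagates it upward.
\end{itemize}

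The issue is not confined to $p\ge 19$. It also arises in your range $5\le p\le 17$ whenever the mod $p$ image is $C_{ns}^+(p)$. There the $p$-adic classification you plan to invoke is not known to be complete for every $p$; the paper relies on completeness only for $p=2,3,13,17$. This is why the paper treats all odd $p$ with non-split Cartan image mod $p$ uniformly through Furio's result.
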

\begin{proof}
    By work of Lemos \cite{lemosTrans,lemosZ}, the mod $p$ image is not contained in a Borel subgroup, or in the normalizer of a split Cartan subgroup. Thus by classification results in Section \ref{Background}, we can assume the mod $p$ Galois representation is surjective, has image equal to $C_{ns}^+(p)$, or is one of the following exceptional cases: $p=2$ and the image is 2Cn or $p=5,13$ and the image is $p$S4. We consider each case separately.
    \begin{itemize}
            \item If the mod $p$ image is surjective and $p \geq 5$, the result follows (e.g., \cite[Corollary 2.13]{Greicius10}).
        \item Suppose $p=3$ and mod 3 image is surjective. Then by the classification of 3-adic images \cite{RouseSutherlandZB22,3adicImage}, we know $\im \rho_{E,3^{\infty}}=\GL_2(\Z_3)$ or  9.27.0.1. In either case, we apply \cite[Proposition 34]{Algorithm2025} to compute the degree of a closed point from this $3$-adic image data to confirm that the result follows.
        \item Suppose $p=2$ and the mod 2 image is surjective. The classification of 2-adic images \cite{2adicImage} shows the claim holds unless the 2-adic image is 4.8.0.2, 4.16.0.2, or 8.16.0.3, as disired.
        \item Suppose the mod $p$ image is equal to $C_{ns}^+(p)$. We may assume $p$ is odd, and let $G$ be the $p$-adic image of Galois. By \cite[Theorem 6.5]{Furio2024} and \cite[Proposition 2.2]{Furio2024}, we are in one of the following cases:
        \begin{enumerate}
            \item $G$ has level $p^n$ and $G\pmod{p^n}=C_{ns}^+(p^n)$. Then the first bullet point of the proof of \cite[Theorem 6]{BourdonEjder} shows there is a single closed point on $X_1(p^n)$ associated to $E$ of maximal degree, and the claim now follows from \cite[Theorem 5.1]{BELOV}.
            \item $G$ has level $p^2$ and

            \[ G \pmod{p^2} \cong C_{\text{ns}}^+(p) \ltimes \left\lbrace I + p\begin{bmatrix}
a& \epsilon b\\
-b & c
\end{bmatrix}\right \rbrace \] 
where $\epsilon$ is a fixed integer which is not a quadratic residue modulo $p$. 
            Let $F=\Q(E[p])$. In this case, there is a choice of basis $\{P,Q\}$ for $E[p^2]$ such that the image of $\rho_{E/F,p^2}$ is $\left\{I + p \begin{pmatrix} a & \epsilon b \\ -b & c
            \end{pmatrix}\right\} \leq \GL_2(\Z/p^2\Z)$. An element of this form fixes the first basis element if and only if it is one of the $p$ matrices of the form
            \[
            \begin{pmatrix} 1 & 0 \\ 0 & 1+pc
            \end{pmatrix}.
            \] Thus $F(P)/F$ has degree $p^2$, which means $\Q(P)/\Q(pP)$ has degree $p^2$. Since $[\Q(pP):\Q]$ has degree $p^2-1$, it follows that $\Q(P)$ has largest possible degree and all elements of order $p^2$ are in a single Galois orbit. The claim now follows from \cite[Theorem 5.1]{BELOV}.
\end{enumerate}
        \item Suppose $p=2$ and the image is 2Cn. By \cite{2adicImage}, it follows that $\im \rho_{E,2^{\infty}}$ is 2.2.0.1, 4.4.0.2, or 8.4.0.1. In each case, there is a single closed point on $X_1(2^a)$ of maximal degree.
        \item If $p=5$ and the image is 5S4, then \cite{RouseSutherlandZB22} shows the 5-adic image is 5.5.0.1. The claim follows. 
        \item If $p=13$ and the image is 13S4, then $j(E)$ is one of 3 known exceptional $j$-invariants by \cite[$\S5.1$]{17adic}. A computation shows that the mod $q$ Galois image is surjective, and so in fact this case does not occur. See for example \cite[Lemma 17]{MayleIsogenyPaper}.\qedhere
    \end{itemize}
\end{proof}

\subsection{Consequence of Work of Smith}\label{RamificationResults}  We model the proof of \cite[Theorem 7.3]{Furio2024}, which uses ramification results of Smith \cite{Smith23}. This improvement upon \cite[Proposition 5]{BourdonGenao} is crucial for the proof of Theorem \ref{finiteness_2_primes}.
\begin{lemma} \label{Lemma2}
    Let $E/\Q$ be a non-CM elliptic curve, and let $q>17$ be prime with $\im\rho_{E,q}=C_{ns}^+(q)$. Let $F=\Q(E[N])$ for some $N$ with $\gcd(N,q)=1$. Then if $P$ is a point of order $q^b$, we have 
        \[
        \frac{q^{2b}-q^{2b-2}}{d} \leq [F(P):F],
        \]
        where $d \in \{1,2,3,4,6\}$. 
\end{lemma}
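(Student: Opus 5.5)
Throughout, write $G=\im \rho_{E,q^\infty}$ and $H=\im \rho_{E/F,q^\infty}$, where $F=\Q(E[N])$. Since $F(P)/F$ has degree equal to the size of the $H$-orbit of $P$ in $E[q^b]$, the goal is to show that the image of $H$ modulo $q^b$ is large. Concretely, I will show it contains a subgroup of index at most $6$ in $G \bmod q^b$ that acts on points of exact order $q^b$ with orbits of length close to $q^{2b}-q^{2b-2}$. Following the model of \cite[Theorem 7.3]{Furio2024}, there are two ingredients:
\begin{enumerate}
\item the structure of $G$ when $\im\rho_{E,q}=C_{ns}^+(q)$ and $q>17$;
\item a ramification argument of Smith \cite{Smith23} controlling how much of $G$ can be lost on passing to $F$.
\end{enumerate}

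First I describe $G$. By \cite[Theorem 6.5, Proposition 2.2]{Furio2024}, as used in the proof of Lemma \ref{Lemma1}, $G$ is either the full preimage of $C_{ns}^+(q)$, or the level $q^2$ group $C_{ns}^+(q)\ltimes\{I+qX\}$ described there. In both cases $G\cap \ker(\GL_2(\Z_q)\to\GL_2(\Z/q\Z))$ is large enough that the stabilizer computation from Lemma \ref{Lemma1} applies. The number of elements of this kernel subgroup fixing a point of order $q^b$ is small, and $G$ modulo $q$ contains the non-split Cartan $C_{ns}(q)\cong \mathbb{F}_{q^2}^\times$, which acts freely transitively on the $q^2-1$ nonzero points of $E[q]$. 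Combining these gives orbit length exactly $q^{2b}-q^{2b-2}$ on points of order $q^b$ under $G$.

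Next I control $H$, which is a normal subgroup of $G$ with $G/H\cong\Gal(F\cap\Q(E[q^\infty])/\Q)$. Let $L=F\cap \Q(E[q^\infty])$.
\begin{itemize}
\item \emph{Pro-$q$ part.} $L$ is unramified at $q$ outside what $F=\Q(E[N])$ allows. Smith's results \cite{Smith23} on ramification of $q$ in $\Q(E[q^\infty])$ for non-split-Cartan image (the inertia at $q$ acting through a tame character of order dividing $(q^2-1)$ up to bounded index) restrict this part.
\item \emph{Abelian part.} $G/H$ is a quotient of $G$ whose corresponding field lies in $F$. Goursat-type considerations, together with the fact that $\PSL_2$-type quotients do not occur since $C_{ns}^+(q)$ is solvable, show that the relevant quotient must factor through the abelianization of $C_{ns}^+(q)$ or through the cyclotomic part.
\item \emph{Ramification at $q$ versus $N$.} Inertia at $q$ in $L/\Q$ must come from $F$, where $q\nmid N$ means $q$ is unramified in $F$. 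So $L$ is unramified at $q$. Smith's theorem says the inertia group at $q$ in $\Q(E[q])$ is large, containing a cyclic subgroup of $C_{ns}(q)$ of index dividing $d\in\{1,2,3,4,6\}$ according to the potential reduction type. Hence $H$ modulo $q$ contains that inertia subgroup, and the kernel-of-reduction part of $G$ is also contained in $H$ because it is generated by wild/tame inertia.
\end{itemize}

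Finally I count the orbit. $H\bmod q$ contains a subgroup of $C_{ns}(q)$ of index at most $d$, which acts freely on nonzero points of $E[q]$, so orbits there have size at least $(q^2-1)/d$. The higher-level part gives a factor $q^{2b-2}$ as in Lemma \ref{Lemma1}. Hence $[F(P):F]\ge (q^{2b}-q^{2b-2})/d$.

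The main obstacle is the ramification step: showing that $H$ retains the full kernel-of-reduction subgroup and a large inertia subgroup of $C_{ns}(q)$. This needs Smith's precise description of inertia at $q$, including the case of potentially multiplicative reduction, which is excluded by non-split image for $q>17$, together with the fact that $q$ is unramified in $F$. I would first try to identify the inertia image at $q$ directly, via Serre's tame inertia description through the fundamental characters of level $2$, and then handle the bounded index $d$ from the semistability defect of $E$ at $q$.
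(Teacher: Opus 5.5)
There is a genuine gap, and it sits in the ramification step you flag as the main obstacle. Two of the assertions carrying the argument are wrong as stated.

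First, $q\nmid N$ does not make $q$ unramified in $F=\Q(E[N])$. The curve $E$ need only have potentially good reduction at $q$. If $L$ denotes the minimal extension of $\Q_q^{nr}$ over which $E$ acquires good reduction, then inertia at $q$ acts on $E[N]$ through $\Gal(L/\Q_q^{nr})$, faithfully once $N\ge 3$. So you cannot put the image of the full inertia group at $q$ into $H$. What N\'eron--Ogg--Shafarevich does give is $L(E[N])=L$, i.e.\ $\Q_q(E[N])\subseteq L$. Hence only the subgroup $\Gal(\overline{\Q}_q/L)$, of index $d=[L:\Q_q^{nr}]$ in inertia, is guaranteed to fix $F$ locally; this is where the factor $d$ really enters.

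Second, the claim that $G\cap\ker(\GL_2(\Z_q)\to\GL_2(\Z/q\Z))$ lies in $H$ ``because it is generated by wild/tame inertia'' has no justification, and it is false as a statement about inertia at $q$. When $E$ has good supersingular reduction at $q$, the image of inertia is contained in a non-split Cartan subgroup of $\GL_2(\Z_q)$, since the local representation is induced from a Lubin--Tate character. By contrast, the kernel of reduction of $G$ is open in $\GL_2(\Z_q)$ by the open image theorem. Nothing in your sketch (normal closures, ramification at the other bad primes) controls the pro-$q$ part of $G/H$. So the factor $q^{2b-2}$ you import from the stabilizer computation for $G$ in Lemma \ref{Lemma1} is not established for $H$.

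The missing idea is that no global information about $G$ or $G/H$ is needed: the bound is purely local.
\begin{itemize}
\item From $\Q_q(E[N])\subseteq L$ you get $[F(P):F]\ge[\Q_q(E[N])(P):\Q_q(E[N])]\ge[L(P):L]$. In your language, the $H$-orbit of $P$ contains its orbit under $\Gal(\overline{\Q}_q/L)$, which has size $[L(P):L]$.
\item By \cite[Proposition 3.1]{Ejder22} and \cite[Theorem 4.5]{Furio2024}, $E$ has potentially good supersingular reduction at $q$ with no canonical subgroup of order $q$. Your sketch never uses this input, but it is exactly what makes every point of exact order $q^b$ equally deep in the formal group.
\item \cite[Theorem 4.6]{Smith23} then gives an element of $L(P)$ of valuation $1/(q^{2b}-q^{2b-2})$, normalized by $v(q)=1$.
\item Since $L/\Q_q$ has ramification index $d\in\{1,2,3,4,6\}$, this forces $[L(P):L]\ge (q^{2b}-q^{2b-2})/d$, which is the lemma.
\end{itemize}
The structure of the $q$-adic image $G$ and the mod $q$ inertia image play no role.
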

\begin{proof}
    By \cite[Proposition 3.1]{Ejder22} and \cite[Theorem 4.5]{Furio2024}, the curve $E$ has potentially good supersingular reduction at $q$ and no canonical subgroup of order $q$. Let $L/\Q_q^{nr}$ be the minimal extension over which $E$ attains good reduction. Since $q>17$, we know $d=[L:\Q_q^{nr}] \in \{1,2,3,4,6\}$; see \cite[$\S5.6$]{serre72}. By the N\'{e}ron-Ogg-Shafarevich criterion \cite[Theorem 1]{SerreTate}, the extension $L(E[N])/L$ is unramified, so $L(E[N])=L$. In particular $\Q_q(E[N]) \subseteq L$. Work of Smith \cite[Theorem 4.6]{Smith23} implies $L(P)$ contains an element with valuation $1/(q^{2b}-q^{2b-2})$, and so 
    \[
    [L(P):L] \geq \frac{q^{2b}-q^{2b-2}}{d}.
    \]
    Since $\Q_q(E[N]) \subseteq L$, we may conclude
    \[
    [\Q_q(E[N])(P):\Q_q(E[N])] \geq \frac{q^{2b}-q^{2b-2}}{d}. 
    \]
    The result follows.
\end{proof}

\begin{corollary}\label{IntroCor}
Let $x\in X_1(N)$ be a non-CM point with $j(x)\in\Q$. Suppose $q \mid N$ for some prime $q > 37$, and set $\ord_{q}(N)=b$. Then \[
\deg(x)\geq\frac{1}{6}\deg(f(x))\deg(f),
\] where $f:X_1(N) \rightarrow X_1(N/q^b)$ is the natural map.    
\end{corollary}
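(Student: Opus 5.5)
Write $N=mq^b$ with $\gcd(m,q)=1$, and let $x=[E,P]$ with $E/\Q$ non-CM, $j(E)=j(x)$, and $P$ of order $N$. Such an $E$ exists because $j(x)\in\Q$ and the degree of $x$ depends only on the geometric isomorphism class. Decompose $P=P_m+P_q$, where $P_m=q^bP$ up to an automorphism of $\langle P\rangle$, $P_m$ has order $m$, and $P_q$ has order $q^b$.

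First I would determine the mod $q$ image. Since $q>37$, Theorem \ref{SerreUnifProgress} shows that $\im\rho_{E,q}$ is $\GL_2(\Z/q\Z)$ or $C_{ns}^+(q)$; the two exceptional $j$-invariants have surjective mod $q$ image for $q>37$.

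If the image is $\GL_2(\Z/q\Z)$, then the $q$-adic representation is surjective for $q\geq 5$ (e.g. \cite{Greicius10}). In that case \cite[Proposition 5.7]{BELOV}, used exactly as in the proof of Lemma \ref{Finite_Degree_Bounds}, gives $\deg(x)=\deg(f)\deg(f(x))$ with no loss.

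The main case is $\im\rho_{E,q}=C_{ns}^+(q)$, and here I would apply Lemma \ref{Lemma2}. Let $K=\Q(x)$ and $K'=\Q(f(x))=\Q(j,\mathfrak h(P_m))$, where $\mathfrak h$ is a Weber function (for $j\neq 0,1728$, a point is determined up to $\pm1$). Set $F=\Q(E[m])$. Lemma \ref{Lemma2} gives $[F(P_q):F]\geq (q^{2b}-q^{2b-2})/6$. Then I bound
\[
[\Q(j,\mathfrak h(P)):K'] \geq \tfrac{1}{2}\,[F(P):F] \geq \tfrac12 [F(P_q):F].
\]
This holds because $F\supseteq K'$ and $F(P)=F(P_q)$, since $P_m\in E(F)$. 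The equality $\Q(x)=\Q(j,\mathfrak h(P))$ and the twist issue cost at most a factor $2$ from $\pm1$. More carefully, I would work with a twist $E'/K'$ on which $P_m$ is $K'$-rational. The lower bound in Lemma \ref{Lemma2} is local, coming from ramification at $q$, and it is insensitive to quadratic twisting by characters unramified or tamely ramified at $q$.

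Next, $\deg(f)$ equals $\tfrac12(q^{2b}-q^{2b-2})$ when $N/q^b\le 2$, and $q^{2b}-q^{2b-2}$ otherwise, by Proposition \ref{prop:degree}. Combining,
\[
\deg(x)=[K':\Q]\,[K:K'] \geq \deg(f(x))\cdot \frac{q^{2b}-q^{2b-2}}{6c},
\]
where $c\in\{1,2\}$ accounts for the $\pm1$ identification. This matches $\deg(f)$ up to the constant, with the factor of $2$ absorbed by the $c_f$ in Proposition \ref{prop:degree}.

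The hard step will be getting exactly $1/6$ rather than $1/12$. I need $[K:K']$ to be at least $[F(P):F]$ divided only by the factor already built into $\deg(f)$. The cleanest route is to work with the $\pm$-quotient throughout: compare $x\mapsto f(x)$ with the Galois action on $\pm P$ over $F$. Over $F$ the twist choice is harmless, because $\Q_q(E[m])\subseteq L$ as in the proof of Lemma \ref{Lemma2}. If $m\le 2$, the $c_f=1/2$ in $\deg(f)$ compensates, and I would check both regimes separately.
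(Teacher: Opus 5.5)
Your route is sound and differs from the paper's, but as written it does not yet prove the statement. The chain you display only gives $[K:K']\geq \frac{1}{12}(q^{2b}-q^{2b-2})$. Your claim that the factor $2$ is ``absorbed by $c_f$'' holds only when $m\le 2$. When $m>2$, Proposition \ref{prop:degree} gives $c_f=1$ and $\deg(f)=q^{2b}-q^{2b-2}$, so you are off by exactly the factor you were worried about.

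The missing observation is short. Let $m>2$ and $\sigma\in\Gal(\overline{\Q}/F)$ with $\sigma(\mathfrak h(P))=\mathfrak h(P)$. Since $j\neq 0,1728$, this gives $\sigma(P)=\pm P$. But $\sigma(P)=P_m+\sigma(P_q)$, so $\sigma(P)=-P$ would force $2P_m=-(P_q+\sigma(P_q))\in E[m]\cap E[q^b]=\{0\}$. That is impossible, since $P_m$ has order $m>2$. Hence $F(\mathfrak h(P))=F(P)=F(P_q)$. Using $K'=\Q(\mathfrak h(q^bP))\subseteq F$, you get
\[
[K:K']\geq [F(\mathfrak h(P)):F]=[F(P_q):F]\geq \tfrac16\,(q^{2b}-q^{2b-2})=\tfrac16\deg(f).
\]
For $m\le 2$ the loss from $\pm P$ is real, and your $c_f=\tfrac12$ argument is exactly right there.

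No twisting is needed anywhere. With $E/\Q$ fixed, $\Q(x)=\Q(\mathfrak h(P))$ and $\Q(f(x))=\Q(\mathfrak h(q^bP))$ already hold, and Lemma \ref{Lemma2} applies to $E$ itself. So drop the passage about $E'/K'$ and the twist-insensitivity of Lemma \ref{Lemma2}. That lemma is stated only for curves over $\Q$, so it would not directly cover a twist over $K'$ anyway.

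Once this is fixed, here is how your route compares with the paper's. The paper twists $E$ so that $-I\in\im\rho_{E,N}$ and obtains $[\Q(P):\Q(q^bP)]\geq\frac16\deg(f)$ from Lemma \ref{Lemma2} in the same way. It then converts field degrees of points into degrees of closed points by citing \cite[Proposition 34]{Algorithm2025} when $N\notin\{q^b,2q^b\}$. When $N\in\{q^b,2q^b\}$ it uses \cite[Proposition 4]{BourdonEjder} together with Lemma \ref{Lemma1}. Your Weber-function bookkeeping over $F=\Q(E[m])$ replaces those external results with a self-contained computation. It also handles the small levels $m\le 2$ uniformly through $c_f$, without needing Lemma \ref{Lemma1}.
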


\begin{proof}
    Let $E/\Q$ be a non-CM elliptic curve with $j(E)=j(x)$. Since the degrees of closed points associated to $E$ are not effected by quadratic twist, we may assume $-I \in \im \rho_{E,N}$. By Theorem \ref{SerreUnifProgress}, the image of $\rho_{E,q}$ is $\GL_2(\Z/q\Z)$ or $C_{ns}^+(q)$. In the first case, the result follows from \cite[Proposition 5.7]{BELOV}. The second case is a consequence of Lemma \ref{Lemma2}. Indeed, let $x=[E,P]\in X_1(N)$ and $f(x)=[E,P']\in X_1(N/q^b)$. By Lemma \ref{Lemma2}, we have $[\Q(P):\Q(P')] \geq \frac{1}{6}\deg(f)$. If $N\not\in\{q^b,2q^b\}$, the result is now a consequence of \cite[Proposition 34]{Algorithm2025}. If $N\in\{q^b,2q^b\}$, this follows from \cite[Proposition 4]{BourdonEjder} and Lemma \ref{Lemma1}.
\end{proof}

\subsection{Proof of Theorem \ref{finiteness_2_primes}} \label{Finiteness_proof}
    Let $x=[E,P] \in X_1(p^aq^b)$ be isolated with $j(x) \in \Q$, and fix an equation for $E/\Q$. Since there are only 13 CM $j$-invariants in $\Q$, we may assume $j(x)$ is non-CM, and by \cite{BourdonEjder} we may assume $a,b>0$. 
    \begin{itemize}
        \item Suppose $p,q\leq 37$. Let $S=\{ \text{primes} \leq 37\}$, and let $m = \prod_{p \in S} p$. Then by Theorem \ref{prop:UniformLevelFiniteSetPrimes}, there is an absolute bound $M$ on the level of the $m$-adic Galois representation for all non-CM elliptic curves over $\Q$. By \cite[Proposition 5.8]{BELOV}, 
        \[
        \deg(x)=\deg(f)\deg(f(x)),
        \]
        where $f:X_1(p^aq^b)\rightarrow X_1(\gcd(p^aq^b,M))$ is the natural map. By Theorem \ref{FiberThm}, we see $f(x)$ is an isolated point on one of finitely many target curves. Each of these has only finitely many isolated points by Theorem \ref{IsolThm}, and hence there are only finitely many isolated $j$-invariants from this case.
        \item Assume $q> 37$ with $\im \rho_{E,q}=\GL_2(\Z/q\Z)$. Then by \cite[Proposition 5.7]{BELOV}, 
        \[
        \deg(x)=\deg(f) \cdot \deg(f(x)),
        \]
        where $f=X_1(p^aq^b) \rightarrow X_1(p^a)$. Thus it follows from Theorem \ref{FiberThm} that $f(x) \in X_1(p^a)$ is isolated. Thus $j(x)$ is on the finite list of isolated $j$-invariants coming from \cite{BourdonEjder}.
        \item Assume $q>37$ with $\im \rho_{E,q}\neq\GL_2(\Z/q\Z)$. Then by Theorem \ref{SerreUnifProgress}, we have  $\im \rho_{E,q}=C_{ns}^+(q)$. Suppose first that $p \neq 2$, or the 2-adic image is not 4.8.0.2, 4.16.0.2, or 8.16.0.3. By Lemma \ref{Lemma1}, we may assume the associated point on $X_1(p^a)$ is of maximal degree. Then by Corollary \ref{IntroCor},
\[
       \deg(x) \geq \frac{p^{2a-2}(p^2-1)\cdot q^{2b-2}(q^2-1)}{12}.
        \]
        This is strictly larger than the genus of $X_1(p^aq^b)$ by  \cite[Proposition 1.40]{Shimura71}, so the point is not isolated. 

        So suppose $p=2$ and the 2-adic image is 4.8.0.2, 4.16.0.2, or 8.16.0.3. Thus this means the point on $X_1(2^a)$ has degree $2^{\max(2a-4,0)}(2^2-1)$.
        As before, an application of Corollary \ref{IntroCor} shows 
\[
        \deg(x) \geq \frac{2^{2a-2}(2^2-1)\cdot q^{2b-2}(q^2-1)}{24},
        \]
        which is strictly larger than the genus by \cite[Proposition 1.40]{Shimura71}. Thus $x$ is not isolated.  
    \end{itemize}

\begin{remark}\label{Rank0Remark} An analogous argument cannot be used for products of more than 2 distinct primes, as the bounds of Corollary \ref{IntroCor} are not sufficient to prove the degree of $x\in X_1(N)$ is larger than the genus. Thus to obtain more general results, one must show the factor of $1/6$ appearing in Corollary \ref{IntroCor} can be improved. By the proof, it suffices to consider the case where the associated elliptic curve $E/\Q$ has $\im \rho_{E,q}=C_{ns}^+(q)$. If $\im \rho_{E,N} \cong \im \rho_{E,N/q^b} \times \im \rho_{E,q^b}$, then the factor $1/6$ is not needed, so this motivates the study of certain ``entanglement" modular curves.

When $\Q(E[p^a]) \cap \Q(E[q^b])\neq \Q$ for distinct primes $p$ and $q$, then we say there is (horizontal) Galois entanglement. This means that $\im \rho_{E,p^aq^b}$ is properly contained in $\im \rho_{E,p^a} \times \im \rho_{E,q^b}$. Several recent papers propose a systematic study of entanglement; see for example \cite{DanielsMorrow22,DanielsLRMorrow23}. However, for applications to Hypothesis 2, we are only interested in entanglement which impacts the degrees of points on modular curves. For example, if $E/\Q$ has $\im \rho_{E,q}=C_{ns}^+(q)$ for $q>37$, then work of Lemos \cite{lemosTrans, lemosZ} shows the other non-surjective prime divisors $p \mid N$ typically have $\im \rho_{E,p}=C_{ns}^+(p)$. We have identified modular curves currently found in the \href{https://beta.lmfdb.org/ModularCurve/Q/}{LMFDB database} which correspond to possible mod $pq$ images with $\im \rho_{E,p}=C_{ns}^+(p)$, $\im \rho_{E,q}=C_{ns}^+(q)$, and for which the factor of $1/6$ in Corollary \ref{IntroCor} cannot be removed. For example: \href{https://beta.lmfdb.org/ModularCurve/Q/15.60.3.f.1/}{15.60.3.f.1}, \href{https://beta.lmfdb.org/ModularCurve/Q/21.126.4.a.1/}{21.126.4.a.1}, \href{https://beta.lmfdb.org/ModularCurve/Q/33.330.17.a.1/}{33.330.17.a.1}, \href{https://beta.lmfdb.org/ModularCurve/Q/35.420.29.f.1/}{35.420.29.f.1}, \href{https://beta.lmfdb.org/ModularCurve/Q/55.1100.81.d.1/}{55.1100.81.d.1}. 
In each case, the modular curve possesses a nontrivial rank 0 quotient. This is interesting since the modular curve associated with the fiber product $C_{ns}^+(p) \times C_{ns}^+(q)$ does \emph{not} possess such a quotient. This leaves open the possibility that one could prove there are only finitely many isolated $j$-invariants in $\Q$ using formal immersion arguments, as in other uniformity results.
\end{remark}
\section{Improved Polynomial Bounds for Rational $j$-invariant}
This section proves Theorem \ref{PolynomialBoundsThm}.
First, we show how results from Sections \ref{Section3} and \ref{Section4} can be applied to obtain preliminary lower bounds on the degree of a point on $X_1(N)$ associated to a non-CM elliptic curve with rational $j$-invariant. The proof of the main result follows in Section \ref{Section5.2}.

\subsection{Preliminary Bounds} This sharpens  \cite[Proposition 5]{BourdonGenao} for large non-surjective prime divisors.
\begin{proposition}\label{LowerBoundProp}
   There exists an absolute constant $C$ such that for any non-CM elliptic curve $E/\Q$ and any $x=[E,P]\in X_1(N)$ we have
    \[
    \deg(x) \geq C \cdot N^2\cdot \prod_{p \mid N}\frac{1}{6}\left(1-\frac{1}{p^2}\right) \]
\end{proposition}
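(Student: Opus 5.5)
We split $N$ according to the size of its prime factors and peel off the large primes one at a time using Corollary \ref{IntroCor}. Write $N = N_0 \cdot q_1^{b_1}\cdots q_r^{b_r}$, where $\Supp(N_0)$ consists of primes $\leq 37$ and $q_1,\dots,q_r>37$ are the distinct primes above $37$ dividing $N$. Since $j(E)\in\Q$ is non-CM, Corollary \ref{IntroCor} applies to $x$ with the prime $q_1$. It gives
\[
\deg(x)\geq \tfrac16 \deg(f_1)\deg(f_1(x)),
\]
where $f_1:X_1(N)\to X_1(N/q_1^{b_1})$ is the natural map. The image $f_1(x)=[E,q_1^{b_1}P]$ is again a non-CM point with rational $j$-invariant, so we may apply the corollary to it with $q_2$. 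Iterating $r$ times yields
\[
\deg(x)\geq 6^{-r}\,\deg\bigl(X_1(N)\to X_1(N_0)\bigr)\cdot \deg(y),
\]
where $y\in X_1(N_0)$ is the image of $x$. In the last step, when the remaining level is $q_r^{b_r}$ or $2q_r^{b_r}$, the corollary's proof already covers the case, so no special handling is needed. Degrees of maps compose multiplicatively by Proposition \ref{prop:degree}, and we must track the constant $c_f$ when $N_0\leq 2$; this changes things only by a bounded factor.

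Next we bound $\deg(y)$ from below. The primes dividing $N_0$ lie in the finite set $S=\{p\leq 37\}$. Theorem \ref{prop:UniformLevelFiniteSetPrimes} gives a uniform level $M$ with $\Supp(M)\subseteq S$ for the $m$-adic representations of all non-CM $E/\Q$, where $m=\prod_{p\in S}p$. Applying Lemma \ref{Finite_Degree_Bounds} with $d=1$ and this $S$ (so the hypothesis on $\Supp(N_0)$ holds trivially) gives
\[
\deg(y)\geq \frac{1}{C_1}\deg\bigl(X_1(N_0)\to X_1(1)\bigr)
\]
with $C_1$ absolute. Combining the two estimates,
\[
\deg(x)\geq \frac{6^{-r}}{2C_1}\, N^2\prod_{p\mid N}\Bigl(1-\frac1{p^2}\Bigr),
\]
using the formula for $\deg(X_1(N)\to X_1(1))$ from Proposition \ref{prop:degree}.

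Finally, the statement has a factor $\frac16$ for \emph{every} prime dividing $N$, including the at most $12$ primes $\leq 37$. We have $6^{-r}\geq 6^{-\omega(N)}$, and the discrepancy between $6^{-r}$ and $6^{-\omega(N)}$ is at most $6^{12}$, which is absorbed into the constant. Taking $C=1/(2C_1)$ therefore works, since $6^{-r}\geq 6^{-\omega(N)}$ makes the inequality only easier.

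The main obstacle is the iteration step. One must check that Corollary \ref{IntroCor} really applies at each stage, to the image point on the lower-level curve, and that the constants do not accumulate beyond one factor of $6$ per large prime. In particular, the corollary's proof invokes \cite[Proposition 34]{Algorithm2025} and the twisting reduction to $-I\in\im\rho$. We should confirm that these are stated for arbitrary levels $N$ with $q\mid N$, not just for levels with few prime factors. If they are, the rest is bookkeeping with Proposition \ref{prop:degree}.
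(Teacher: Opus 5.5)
Your proposal is correct and follows the paper's proof. You split $N$ into its part supported on primes $\le 37$ and its part supported on primes $>37$, peel off the large primes by iterating Corollary~\ref{IntroCor}, handle the small part with Lemma~\ref{Finite_Degree_Bounds} for $d=1$, and use $6^{-r}\ge 6^{-\omega(N)}$. The iteration you flag as the main obstacle is legitimate, because Corollary~\ref{IntroCor} is stated for arbitrary levels $N$.

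The only difference is bookkeeping. The paper twists to get $-I\in\im\rho_{E,N}$, works with the field degrees $[\Q(P):\Q(n_2P)]$ and $[\Q(n_2P):\Q]$, and pays a factor $\tfrac12$ at the end. You instead chain closed-point degree inequalities, which matches how the corollary and lemma are actually stated.
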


\begin{proof}
   Let $E/\Q$ be a non-CM elliptic curve. Replacing $E$ by a twist if necessary, we may assume $-I \in \im \rho_{E,N}$. Set $S \coloneqq \{\text{primes} \leq 37\}$. Let $N=n_1\cdot n_2$, where $\Supp(n_1) \subseteq S$ and $\Supp(n_2) \cap S=\varnothing$. By repeatedly apply Corollary \ref{IntroCor}, we see 
        \[
        [\Q(P):\Q(n_2P)]\geq n_2^2\prod_{p \mid n_2}\frac{1}{6}\left(1-\frac{1}{p^2}\right).     \]
        By Lemma \ref{Finite_Degree_Bounds}, there is a constant $C_0$ which does not depend on $E$ such that
 \[
        [\Q(n_2P):\Q] \geq \frac{1}{C_0} \deg(X_1(n_1) \rightarrow X_1(1)) \geq \frac{1}{C_0} \cdot\frac{1}{2} \cdot n_1^2\cdot \prod_{p \mid n_1}\frac{1}{6}\left(1-\frac{1}{p^2}\right).
        \]
        Since $\deg(x)$ is at least $\frac{1}{2}\cdot [\Q(P):\Q] =\frac{1}{2}\cdot[\Q(P):\Q(n_2P)]\cdot[\Q(n_2P):\Q]$, the result follows. \qedhere
  
\end{proof}
\subsection{Proof of Theorem \ref{PolynomialBoundsThm}}\label{Section5.2}
Suppose $E/F$ is a non-CM elliptic curve with $j(E) \in \Q$, and let $P \in E(F)$ of order $N$.
    Let $d=[F:\Q]$. By Proposition \ref{LowerBoundProp} we have
    \begin{align*}
    d &\geq C\cdot N^2 \cdot  \prod_{p \mid N}\frac{1}{6} \cdot \left(1-\frac{1}{p^2}\right)\\
    & > C\cdot N^2 \cdot  \prod_{p \text{ prime}}\left(1-\frac{1}{p^2}\right)\cdot \prod_{p \mid N}\frac{1}{6}\\
    &=\frac{C}{\zeta(2)}\cdot N^2 \left(\frac{1}{6}\right)^{\omega(N)},
    \end{align*}
    where $\zeta$ denotes the Riemann zeta function and $\omega(N)$ counts the number of primes dividing $N$. 

    Suppose $N \geq 3$ and fix $0 < \epsilon <\frac{1}{4}$. We proceed as suggested by Pete Clark. By \cite[Theorem 11]{Robin83} we have $\omega(N)\leq 1.3841 \log(N)/\log(\log(N))$, and
    it follows that
    \[
    6^{\omega(N)}\leq 6^{c_1 \log_6(N)/\log(\log(N))}=N^{c_1/\log(\log(N))}.
    \]
    Thus for $N$ sufficiently large, we have $6^{\omega(N)}<N^{\epsilon}$, and so there exists a constant $c_{\epsilon,1}$ depending on $\epsilon$ such that $6^{\omega(N)}\leq c_{\epsilon,1} N^{\epsilon}$. Combining this with our initial inequality gives 
    \[
    d > \frac{C}{\zeta(2)}\cdot \frac{1}{c_{\epsilon,1}} \cdot N^{2-\epsilon}.
    \]
    Solving for $N$ shows 
     $
     N < c_{\epsilon,2} \cdot d^{1/(2-\epsilon)}<c_{\epsilon,2} \cdot d^{1/2 +\epsilon},
    $ as desired. Since $\#E(F)_{\tors} \mid (\exp E(F)_{\tors})^2$, the bound on the full torsion subgroup follows.

\section{Examples of Isolated $j$-invariants} \label{ExampleSection}

Here, we justify the examples of non-CM isolated $j$-invariants appearing in Table \ref{tab:non_cm_j_invariants}. Each point appears in van Hoeij's table of low degree points \cite{vanHoeij}. The degree 1 isolated $j$-invariants are described in detail in \cite{Algorithm2025}, so it suffices to provide justification for isolated $j$-invariants of degree $2 \leq d \leq 10$. 

\subsection{Justifying Isolated} If a curve's Jacobian has rank 0 over $\Q$, then any point in degree less than the $\Q$-gonality is isolated. The $\Q$-gonality of $X_1(N)$ is known for $1 \leq N \leq 40$ by work of Derickx and van Hoeij \cite{DerickxVanHoeij2014}, and the gonality of $X_1(42)$ is at least 10 by recent work of Najman and Varivoda \cite{NajmanVarivoda}. For isolated $j$-invariants in Table \ref{tab:non_cm_j_invariants} having degree $d\geq 2$, the modular curves have rank 0 by \cite[Theorem 3.1]{DEvHMZB2021}. This justifies the listed points are isolated except for $X_1(25)$ and $X_1(32)$. For those, we apply Theorem \ref{IsolThm} combined with \cite[Theorem 3]{DerickxVanHoeij2014} for $X_1(25)$ and \cite[Theorem 1.1]{NajmanVarivoda} for $X_1(32)$.

\subsection{Justifying Non-CM} If $x\in X_1(N)$ is a CM point with $j(x)$ of degree $d>1$, then by \cite[Theorem 6.2]{BC1}
\[
\deg(x) \geq d \cdot \varphi(N)/2.
\]
Thus all points in the table associated to $j$-invariants of degree greater than 1 are non-CM.

\section{On Stronger Polynomial Bounds}
Hindry and Silverman \cite[$\S3$]{HS99} ask whether there exists a constant $c$ such that for any non-CM elliptic curve $E$ defined over $F$ of degree at least 3, one has $\# E(F)_{\tors} \leq c \sqrt{[F:\Q] \log \log [F:\Q]}$. This strengthens Hypothesis 4 by assuming the existence of a single, absolute constant $c$, in addition to giving a better bound for $\# E(F)_{\tors}$ as a function of $[F:\Q]$. Moreover, this is essentially the strongest possible upper bound on the full torsion subgroup: For any non-CM elliptic curve $E$ over $\Q$, Breuer \cite[Theorem 2.1]{Bre10} has constructed a sequence of fields $F_n$ of degree $d_n \geq 3$ such that $\#E(F)_{\tors} \gg \sqrt{d_n \log \log d_n}$.

Here, we show that an affirmative answer to even the exponent bound suggested by Hindry and Silverman would imply Hypothesis 1. Thus if $C$ can be taken to be an absolute constant in Hypothesis 4, this would imply Hypothesis 1.

\begin{theorem}\label{LastResult}
    Suppose there exists an absolute constant $c$ such that for any non-CM elliptic curve $E$ defined over a number field $F$ of degree at least 3, one has $\exp E(F)_{\tors} \leq c \sqrt{[F:\Q] \log \log [F:\Q]}$. Then Hypothesis 1 holds.
\end{theorem}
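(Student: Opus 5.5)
The plan is to argue by contrapositive, prime by prime. Fix $d$, and let $E/k$ be non-CM with $[k:\Q]=d$. Take $p$ large, and suppose $\im\rho_{E,p^\infty}\not\supseteq \SL_2(\Z_p)$. For $p\geq 5$, a closed subgroup of $\GL_2(\Z_p)$ whose reduction mod $p$ contains $\SL_2(\Z/p\Z)$ contains $\SL_2(\Z_p)$; this is Serre's lifting lemma. So it suffices to show that for $p>C(d)$ the mod $p$ image $G=\im\rho_{E,p}$ contains $\SL_2(\Z/p\Z)$.

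If it does not, Dickson's classification puts $G$ in one of four classes: a Borel subgroup, the normalizer of a split Cartan, the normalizer of a non-split Cartan, or an exceptional group with projective image $A_4$, $S_4$ or $A_5$. In each case I will try to produce a field $F\supseteq k$ and a point of order $p$ in $E(F)$ with $[F:\Q]\leq A\,d\,p$ for an absolute constant $A$. The hypothesis then gives
\[
p\leq \exp E(F)_{\tors}\leq c\sqrt{A d p\log\log(A d p)}.
\]
This fails once $p$ exceeds a constant depending only on $c$ and $d$, which is exactly Hypothesis 1 with $C=C(d)$. If necessary I first enlarge $F$ by a bounded amount so that $[F:\Q]\geq 3$.

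\textbf{Borel and split Cartan cases.} If $G$ is Borel, there is a $\Gal_k$-stable line $\langle P\rangle$. Then $\Gal_k$ acts on $P$ through a character of order dividing $p-1$, so $[k(P):k]\leq p-1$. If $G$ lies in the normalizer of a split Cartan, pass to the quadratic extension cut out by the Cartan itself; there the image is diagonal, so a stable line exists, and $[k(P):k]\leq 2(p-1)$.

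\textbf{Exceptional case.} Here the projective image has order at most $60$. This is where I would invoke the bound on exceptional subgroups \cite[Remark 2.1]{GhateParent}: $|G|\leq 60(p-1)$, with the factor $p-1$ coming from scalars. Any $P\neq 0$ then has orbit size at most $|G|$, so $[k(P):k]\leq 60(p-1)$. (This step is only needed when $p$ is large.)

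\textbf{Non-split Cartan case: the main obstacle.} If $G\subseteq C_{ns}^+(p)$, Galois acts freely and transitively on the nonzero vectors of $E[p]$ as soon as $G$ contains the full Cartan, and the naive bound is only $[k(P):k]\leq 2(p^2-1)$, which is too large. The first reduction I would try is to pass to the field $k'$ cut out by $\det\rho_{E,p}$ together with the Cartan index-$2$ condition. Then $[k':k]\leq 2(p-1)$, and $\Gal_{k'}$ acts through the cyclic group $C_{ns}(p)\cap\SL_2$ of order $p+1$, so points of order $p$ appear in degree $\ll dp^2$. This already matches $\exp\geq p$ only up to the $\sqrt{\log\log}$ factor, so a gain is still needed from elsewhere. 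My plan is to gain it by enlarging the torsion rather than shrinking the field. Over $F=k(E[p])$, of degree $\leq 2d(p^2-1)$, I would combine the $p$-torsion with points of order $\ell$ for auxiliary small primes $\ell$, or with $p^n$-torsion using the structure of the $p$-adic image above $C_{ns}^+(p)$ in the style of \cite{Furio2024}. The goal is to manufacture an exponent of size $\gg\sqrt{D\log\log D}$ in degree $D$, mimicking the way CM curves violate such bounds. I expect this step to be the crux of the proof, and the place where the precise shape of the Hindry--Silverman bound, including its $\log\log$ factor, has to be exploited.
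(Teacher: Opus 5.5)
\textbf{There is a genuine gap: the normalizer of non-split Cartan case is left unresolved, and the approaches you sketch for it would not work.}

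Your reduction to the mod $p$ image is fine, and so are the Borel, split Cartan and exceptional cases. In the exceptional case, bounding $|G|\le 60(p-1)$ directly is elementary, since the projective image has order at most $60$. It is a valid substitute for the paper's use of \cite[Remark 2.1]{GhateParent} to exclude that case once $p>60d+1$.

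The non-split Cartan case, however, is the heart of the theorem. Adjoining $\ell$-torsion for auxiliary primes multiplies the exponent by $\ell$ but the degree by roughly $\ell^2$. Using $p^n$-torsion of $E$ itself over a $C_{ns}^+(p)$-type image costs degree $\asymp p^{2n}$ for exponent $p^n$. In both cases $\exp/\sqrt{\deg}$ stays bounded, so at best you would be competing with the $\sqrt{\log\log}$ factor, with no control over the auxiliary images. What is needed is a power saving. It comes from leaving $E$: the hypothesis applies to \emph{every} non-CM curve, so you may pass to an isogenous one.

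Concretely, this is the step the paper takes via \cite[Theorem 3.1]{OddDegQCurve}. Let $F_0=k(E[p])$, so $[F_0:\Q]\le 2d(p^2-1)$. Pick a subgroup $C\subset E[p]$ of order $p$ and a point $Q\in E[p^2]$ with $pQ\notin C$. Then $E'=E/C$ is defined over $F_0$, and the image $\bar Q$ of $Q$ in $E'$ has order $p^2$. Since $E[p]\subseteq E(F_0)$, the map $\sigma\mapsto\sigma(Q)-Q$ is a homomorphism $\Gal_{F_0}\to E[p]$. The point $\bar Q$ is fixed by the preimage of $C$ under this map, which is a subgroup of index dividing $p$. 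So the non-CM curve $E'$ has a point of order $p^2$ over a field $F$ with $[F:\Q]\le 2dp(p^2-1)$. The inequality $p^2\le c\sqrt{2dp^3\log\log(2dp^3)}$ then fails for large $p$. This gives an exponent $\asymp [F:\Q]^{2/3}$, so the $\log\log$ factor in the hypothesis plays no role; it is not where the gain should come from.
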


\begin{proof}
   Suppose Hypothesis 1 fails for some $d \in \Z^+$. Then for arbitrarily large primes $p$, there exists a non-CM elliptic curve $E$ defined over a degree $d$ number field for which $\im \rho_{E,p^{\infty}}$ does not contain $\SL_2(\Z_{p})$. In particular, provided $p \geq 5$, this means the mod $p$ image does not contain $\SL_2(\Z/p\Z)$ for arbitrarily large primes $p$; see \cite[Lemma 2.12]{Greicius10}. By \cite[Remark 2.1]{GhateParent}, $\im \rho_{E,p}$ cannot have image in $\text{PGL}_2(\Z/p\Z)$ isomorphic to $A_4,S_4,$ or $A_5$ once $p>60d+1$. Thus by
    \cite[$\S2$]{serre72}, one of the following cases must occur for infinitely many primes $p$:
    \begin{enumerate}
        \item $\im \rho_{E,p}$ is contained in a Borel subgroup. In this case, $E$ attains a point of order $p$ in an extension of degree dividing $p-1$. Thus, there is a number field $F$ of degree dividing $d(p-1)$ such that $E(F)$ contains a point of order $p$.
        \item $\im \rho_{E,p}$ is contained in the normalizer of a split Cartan subgroup. Here, $E$ attains a $p$ isogeny over a quadratic extension, and so as above there is a number field $F$ of degree dividing $2d \cdot (p-1)$ such that $E(F)$ contains a point of order $p$.
         \item $\im \rho_{E,p}$ is contained in the normalizer of a non-split Cartan subgroup. Here, $E$ attains full $p$-torsion over an extension of degree at most $2(p^2-1)$. As shown in \cite[Theorem 3.1]{OddDegQCurve}, there is an elliptic curve $E'$ (geometrically isogenous to $E$) defined over a number field $F$ of degree at most $d\cdot 2p(p^2-1)$ which has an $F$-rational point of order $p^2$.
    \end{enumerate}
    In each case, this would contradict the assumption that for any $F$ of degree at least 3,
    \[
    \exp E(F)_{\tors} \leq c \sqrt{[F:\Q] \log \log [F:\Q]}.
    \qedhere
    \] 
\end{proof}

\bibliographystyle{amsplain}
\bibliography{bibliography}
\end{document}